\newcounter{lemma}
\newtheorem{Theorem}{Theorem}
\newtheorem{Lemma}[lemma]{Lemma}
\newtheorem{Proposition}[lemma]{Proposition}
\newtheorem{theorem}{Theorem}
\theoremstyle{definition}
\newtheorem{Example}[lemma]{Example}
\newtheorem{Notation}[lemma]{Notation}
\newtheorem{Remark}[lemma]{Remark}
\def\H{\mathbb H}
\def\Emb{\mathrm{Emb}}
\def\tr{\operatorname{tr}}
\def\trd{\operatorname{trd}}
\def\nrd{\operatorname{nrd}}
\def\g{\gamma}
\def\G{\Gamma}
\def\B{\mathcal B}
\def\F{\mathbb F}
\def\N{\mathbb N}
\def\O{\mathcal O}
\def\Q{\mathbb Q}
\def\R{\mathbb R}
\def\Z{\mathbb Z}
\def\be{\boldsymbol e}
\def\bone{\boldsymbol 1}
\def\new{\mathrm{new}}
\def\mod{\  \mathrm{mod}\ }
\def\gen#1{\langle #1\rangle}
\def\JS#1#2{\left(\frac{#1}{#2}\right)}
\def\SL{\mathrm{SL}}
\def\CM{\mathrm{CM}}
\def\SM#1#2#3#4{\left(\begin{smallmatrix}#1&#2\\#3&#4\end{smallmatrix}
	\right)}
\newcommand{\tabcaption}{\def\@captype{table}\caption}
\begin{document}
	
	\title{Jacquet-Langlands correspondence for non-Eichler orders}
	
	\author{Fang-Ting Tu}
	\address{Department of Mathematics, Louisiana State University, Baton
		Rouge, LA70803-4918, USA}
	\email{tu@math.lsu.edu}
	
	\author{Yifan Yang}
	\address{Department of Mathematics, National Taiwan
		University and National Center for Theoretical Sciences, Taipei,
		Taiwan 10617}
	\email{yangyifan@ntu.edu.tw}
	\date{\today}
	\subjclass[2000]{}
	\thanks{The authors would like to thank John Voight for his helpful comments on this paper. The first author is partially supported by the NSF grant DMS \#2302531; the second author was partially supported by Grant 113-2115-M-002 -003 -MY3   of the Ministry of Science and Technology, Taiwan (R.O.C.).}
	
	\begin{abstract} 
		In this note, we give a concrete realization of the Jacquet-Langlands correspondence for non-Eichler orders of indefinite quaternion algebras defined over $\Q$. To be more precise, we consider a special type of index-two suborder of the Eichler order of level $N$ in the quaternion algebra with an even discriminant $D$. 
	\end{abstract}
	\maketitle
	
	\section{Introduction and statements of results}
	
	Let $\B$ be an indefinite quaternion algebra of discriminant $D$ over
	$\Q$. Up to conjugation, there is a unique embedding $\iota$ from  $\B$ into $M(2,\R)$. Let $\O$ be an order in $\B$  and $\O^1$ be its norm-one group.  Then $\G(\O):=\iota(\O^1)$ is a discrete subgroup $\SL(2,\R)$ of first kind and acts on the upper half plane $\H$ via the linear fractional transformations. When $\B\neq M(2,\Q)$, we let $X(\O)$ be the compact Riemann surface $\G(\O)\backslash\H$.  To ease notation, we will also use $\G(\O)$ to indicate the norm one group $\O^1$ in $\B$.

	For a positive squarefree integer $D$ with an even number of
	prime divisors and a positive integer $N$ relatively prime to $D$,
	we let $\O(D,N)$ denote the Eichler order of level $N$ in the
	quaternion algebra $\B_D$ of discriminant $D$ over $\Q$
	and $\Gamma(D,N)$ be its norm-one group.   For a positive even integer $k$, let $S_k(\Gamma(D,N))$ be the space of modular forms of weight $k$ on $\Gamma(D,N)$. Also, for a positive integer $M$, let $S_k(\Gamma_0(M))$ denote the space of modular forms of weight $k$ on $\Gamma_0(M)$. Then the classical Jacquet-Langlands correspondence for $\O(D,N)$ can be stated in the following form.
	
	\begin{theorem}[\cite{Hida, JL}]
		Let $D>1$ be a positive squarefree integer with an even number of prime divisors and $N$ be a positive integer relatively prime to $D$. For a positive integer $n$ relatively prime to $DN$, let $T_n$ denote the Hecke operator on $S_k(\Gamma(D,N))$ or $S_k(\Gamma_0(DN))$. We have 
		$$
		\tr(T_n|S_k(\Gamma(D,N)))=\tr(T_n|S_k(\Gamma_0(DN)^{D\text{-new}})).
		$$
	\end{theorem}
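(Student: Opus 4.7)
My plan is to deduce the identity from the Eichler-Selberg trace formula applied to both sides, together with a M\"obius-type inversion to handle the $D$-new condition on the classical side.

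\emph{Step 1 (quaternionic side).} I would first apply the trace formula for $T_n$ on $S_k(\Gamma(D,N))$. Since $D>1$, the group $\Gamma(D,N)$ is cocompact in $\SL(2,\R)$, so there are no parabolic contributions, and for $n$ coprime to $DN$ the hyperbolic terms vanish as well. The trace thus decomposes into an identity contribution (present only when $n$ is a perfect square) and elliptic contributions, one for each $\Gamma(D,N)$-conjugacy class of elliptic elements of reduced norm $n$. Each elliptic contribution is a product of the standard elliptic polynomial $P_k(t,n)$, where $t$ is the reduced trace, and a count of optimal embeddings of the associated imaginary quadratic order into $\O(D,N)$.

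\emph{Step 2 (classical side).} For each divisor $M'$ of $D$, the classical Eichler-Selberg trace formula expresses $\tr(T_n|S_k(\Gamma_0(M'N)))$ as a sum of identity, elliptic, hyperbolic, and parabolic terms. The $D$-new subspace is cut out by the degeneracy maps at each prime $p\mid D$; since at each such prime the old forms appear with multiplicity two via the two standard degeneracy maps, Atkin-Lehner-style M\"obius inversion yields
$$\tr(T_n|S_k(\Gamma_0(DN)^{D\text{-new}}))=\sum_{M'\mid D}c_{M'}\,\tr(T_n|S_k(\Gamma_0(M'N)))$$
for explicit integer coefficients $c_{M'}$ that factor multiplicatively over the primes of $D$.

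\emph{Step 3 (comparison).} Substituting the Eichler-Selberg formula into the right-hand side of the display above, I would verify term by term that everything except the elliptic part cancels. The identity, parabolic, and hyperbolic contributions each factor multiplicatively over $D$ in a way that makes the M\"obius combination vanish. What remains is an equality between the quaternionic elliptic sum and the combined classical elliptic sum, which reduces to a local statement at each prime $p\mid D$: the number of optimal embeddings of an imaginary quadratic order $R$ into the ramified local order at $p$ equals the corresponding alternating sum of optimal embedding counts into split local orders at $p$ of varying level. This is the content of Eichler's local optimal embedding theorem at ramified primes.

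\emph{Main obstacle.} The technical heart of the argument is the local bookkeeping at primes dividing $D$: one must match Eichler's local embedding numbers with the coefficients produced by the M\"obius inversion, confirming both the vanishing of the non-elliptic terms and the equality of the elliptic counts. Keeping track of contributions from orders of different conductors, handling small automorphism groups (the $j=2,3$ elliptic torsion), and dealing with the prime $p=2$ when $D$ is even all require care, and it is precisely these issues that the non-Eichler refinement announced in the abstract will need to revisit in a more delicate form.
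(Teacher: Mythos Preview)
Your outline is essentially the same route the paper takes (it carries this out en passant in the proof of Theorem~\ref{theorem: JL}, culminating in \eqref{eq: LHS first}): trace formula on both sides, extraction of the $D$-new trace via the convolution inverse $\delta$ of $\sigma_0$ as $\sum_{m\mid D}\delta(D/m)\,\tr(T_n\mid S_k(\Gamma_0(mN)))$, and then Eichler's local embedding numbers (Lemma~\ref{lemma: |CM| 1}) to match the elliptic terms prime by prime.

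One correction to your Step~3: the identity contribution and the weight-$2$ correction do \emph{not} vanish under the M\"obius combination; they survive and match the corresponding terms on the quaternionic side. Concretely, the paper computes $\sum_{m\mid D}\delta(D/m)\psi(mN)=\phi(D)\psi(N)$ (Lemma~\ref{lemma: convolution}(3)) and $\sum_{m\mid D}\delta(D/m)=\mu(D)=1$ since $D$ has an even number of prime factors, so both terms are present on both sides with equal values. It is only the hyperbolic/cusp term $A_3$ that genuinely collapses to zero, via $(\delta\ast\sigma_0)(D)=0$ for $D>1$.
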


	Here for a positive integer $L$ and a positive divisor $M$ of $L$, we let
	$$
	S_k(\Gamma_0(L))^{M\text{-new}}:=
	\bigoplus_{M'|L,M|M'}\{g(m\tau):g\in S_k(\Gamma_0(M'))^{\text{new}},
	m|(L/M')\},
	$$
	where $ S_k(\Gamma_0(M))^{\text{new}}$ denotes the newform subspace of $S_k(\Gamma_0(M))$. 
	
	A natural question to ask is whether analogous correspondences exist for non-Eichler orders of $\B_D$.
	In view of automorphic representations, such correspondences for non-Eichler orders exist. However, Hecke operators in the case of non-Eichler orders may not have a clean description as in the case of Eichler orders. Also,
	it is hard to match spaces of modular forms on a non-Eichler order to spaces of classical modular forms on some congruence subgroup of $\SL(2,\Z)$. As far as we know, there are few explicit realizations of Jacquet-Langlands correspondence for non-Eichler orders in an indefinite quaternion algebra known in literature (see \cite[et al.]{Gelbart, Gross, Shimizu} for local consideration, and  \cite[et al.]{HPS-basis, Martin, Ponomarev, Voight-book} for definite quaternion algebra cases). The purpose of this paper is to provide such an example.
	
	We first describe the non-Eichler orders we are interested in. Throughout the paper, we assume that $D$ is even. In this case,
	the function $w:\alpha\to\frac12v_2(\nrd(\alpha))$ defines a discrete
	valuation on the division algebra $\B_D\otimes_\Q\Q_2$, where $v_2$ is the $2$-adic valuation and $\nrd(\alpha)$ denotes the reduced norm of $\alpha$. Then the
	maximal $\Z_2$-order $R=\O(D,N)\otimes_\Z\Z_2$ is equal to the
	valuation ring of $\B_D\otimes_\Q\Q_2$ with respect to $w$. Let $P$ be
	the unique maximal (two-sided) ideal of $R$. We have $R/P\simeq\F_4$
	(see \cite[Theorem 13.1.6]{Voight-book}). Then $\Z_2+P$ is a suborder
	of index $2$ of $R$. 
	It follows that, by the local-global correspondence for orders in $\B_D$, the Eichler order $\O(D,N)$ has a suborder $\O'(D,N)$ of index $2$ such
	that $\O'(D,N)\otimes_\Z\Z_p=\O(D,N)\otimes_\Z\Z_p$ for odd prime $p$
	and $\O'(D,N)\otimes_\Z\Z_2$ has index $2$ in $\O(D,N)\otimes_\Z\Z_2$. The following is an explicit example of such an order.
	
	\begin{Example} Let $D=2p_1\ldots p_r$, where $p_1,\ldots,p_r$ are all
		congruent to $3$ modulo $4$ and $r$ is odd. Then $\O(D,1)$ and
		$\O'(D,1)$ can be realized as
		$$
		\O(D,1)=\Z+\Z i+\Z j+\Z\frac{1+i+j+ij}2, \qquad
		\O'(D,1)=\Z+\Z i+\Z j+\Z ij,
		$$
		where $i^2=-1$ and $j^2=p_1\ldots p_r$. In the case of $D=6$, the group $\G(\O(6,1))/\{\pm 1\}$ is generated by the elements 
		$$
		\gamma_2=i, \quad \g_3=(1-3i+j-k)/2, \quad\g_4=(1-3i-j-k)/2,   \quad  \g_1=(\g_2\g_3\g_4)^{-1}=2i+j,
		$$
		where $\g_1^2=\g_2^2=\g_3^3=\g_4^3=-1$. The  group $\G(\O'(6,1))/\{\pm 1\}$  is generated by 
		$$
		\gamma_3^m\gamma_1\gamma_3^{-m} \mbox{and} \quad \gamma_3^m\gamma_2\gamma_3^{-m}, \quad m=0, 1, 2. 
		$$
	\end{Example}
	
	\medskip
	
	We let $\Gamma(D,N)$ and $\Gamma'(D,N)$ denote the groups of norm-one
	elements in $\O(D,N)$ and $\O'(D,N)$, and $X(D,N)$ and $X'(D,N)$
	denote the Shimura curves associated to 
	$\O(D,N)$ and $O'(D,N)$, respectively. Also, we let $S_k(\Gamma(D,N))$
	and $S_k(\Gamma'(D,N))$ be the spaces of modular forms of weight
	$k$ on $\Gamma(D,N)$ and $\Gamma'(D,N)$, respectively.
	On the space $S_k(\Gamma'(D,N))$, we can define Hecke operators in the same way as $S_k(\Gamma(D,N))$.
	Namely, for a positive integer $n$ relatively prime to $DN$, let $M(n)$ be the set of elements of reduced norm $n$ in $\O'(D,N)$ (by Lemma \ref{lemma: Gamma'(D,N)}(4), $M(n)$ is nonempty). Then the Hecke operator $T_n$ on $S_k(\Gamma'(D,N))$ can be defined by
	$$
	T_n:f\longmapsto n^{k/2-1}
	\sum_{\gamma\in\Gamma'(D,N)\backslash M(n)}f\big|_k\gamma.
	$$
	We have the following Jacquet-Langlands correspondence for the non-Eichler order
	$\O'(D,N)$.
	
	\begin{Theorem} \label{theorem: JL}
		With $D$ and $N$ given as above, we have, for all
		positive integer $n$ such that $(n,DN)=1$ and all positive even
		integers $k$,
		$$
		\tr(T_n\big|S_k(\Gamma'(D,N)))
		=\tr(T_n\big|S_k(\Gamma_0(DN))^{D\text{-new}})
		+2\tr(T_n\big|S_k(\Gamma_0(2DN))^{2D\text{-new}}).
		$$
	\end{Theorem}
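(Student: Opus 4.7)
The plan is to reduce Theorem \ref{theorem: JL} to the classical Jacquet-Langlands correspondence for Eichler orders (Theorem 1) by analyzing the inclusion $\Gamma'(D,N)\subset\Gamma(D,N)$ and decomposing $S_k(\Gamma'(D,N))$ under the resulting finite quotient. The first ingredient, which will need a separate lemma (and is presumably the ``Lemma \ref{lemma: Gamma'(D,N)}'' alluded to in the introduction), is that $\Gamma'(D,N)$ is a \emph{normal} subgroup of $\Gamma(D,N)$ with $\Gamma(D,N)/\Gamma'(D,N)\cong\F_4^\times\cong\Z/3\Z$. With $R$ and $P$ as in the introduction, locally at $2$: $R^\times$ normalizes $\Z_2+P$ (since $P$ is the unique two-sided maximal ideal and $\Z_2$ is central), the identity $(\Z_2+P)^\times=1+P$ holds, and a direct computation gives $\nrd(1+P)=1+2\Z_2=\Z_2^\times$. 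Consequently, reducing modulo $P$ yields $R^1/(\Z_2+P)^1\cong\F_4^\times$, and strong approximation for the indefinite algebra $\B_D$ upgrades this to the global statement.

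Having this normal structure, $\Gamma(D,N)/\Gamma'(D,N)$ acts on $S_k(\Gamma'(D,N))$ via the weight-$k$ slash action, and this action commutes with the Hecke operators $T_n$ (for $(n,DN)=1$): conjugation by $\delta\in\Gamma(D,N)$ permutes $M(n)\subset\O'(D,N)$ while preserving reduced norm. Decomposing into the three isotypic components for this $\Z/3\Z$-action,
\[
S_k(\Gamma'(D,N))=S_k(\Gamma(D,N))\oplus V_\chi\oplus V_{\chi^2},
\]
where $\chi$ is a non-trivial character of $\Z/3\Z$, and $\tr(T_n|V_\chi)=\tr(T_n|V_{\chi^2})$ since each contributing automorphic representation of $\B_D^\times(\mathbb A)$ yields a one-dimensional piece in each of $V_\chi,V_{\chi^2}$ with the same $T_n$-eigenvalue. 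Combining with Theorem 1 applied to the first summand, the claim reduces to
\[
\tr(T_n|V_\chi)=\tr(T_n|S_k(\Gamma_0(2DN))^{2D\text{-new}}).
\]

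For this final identity, I would apply the Eichler-Selberg trace formula on both sides. On the quaternionic side, write
\[
\tr(T_n|V_\chi)=\frac{1}{3}\sum_{\delta\in\Gamma(D,N)/\Gamma'(D,N)}\overline{\chi(\delta)}\,\tr\!\bigl([\delta]\circ T_n\,\big|\,S_k(\Gamma'(D,N))\bigr)
\]
and expand via the trace formula for $\Gamma'(D,N)$, obtaining a $\chi$-weighted sum over conjugacy classes of regular elliptic elements of reduced norm $n$ in $\O'(D,N)$. On the classical side, the Eichler-Selberg formula for $S_k(\Gamma_0(2DN))^{2D\text{-new}}$ gives a sum of optimal-embedding counts of imaginary quadratic orders into Eichler orders of level $2DM$ ($M\mid N$). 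The main obstacle will be the term-by-term matching: one must understand how an elliptic class of $\Gamma(D,N)$ refines into classes under $\Gamma'(D,N)$, evaluate the $\chi$-weighting on coset representatives, and identify each weighted contribution with an optimal-embedding count at level $2DM$ on the $\GL_2$ side. At a conceptual level, this matching implements the local Jacquet-Langlands at $2$ pairing non-trivial $\F_4^\times$-isotypes in representations of $(\B_D\otimes\Q_2)^\times$ with depth-zero supercuspidal representations of $\GL_2(\Q_2)$ of conductor $4$; note that the trivial central character forces the conductor-$4$ constraint to exclude ramified principal series, so the relevant forms in $S_k(\Gamma_0(2DN))^{2D\text{-new}}$ are precisely those with supercuspidal local component at $2$. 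A conceptually cleaner alternative is the adelic approach: identify $V_\chi$ as the $\chi$-isotype of $\widehat{\O'(D,N)}^\times$-invariants, invoke the global Jacquet-Langlands with multiplicity one, and match representations directly.
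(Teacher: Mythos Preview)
Your approach differs structurally from the paper's. You reduce Theorem~\ref{theorem: JL} to the classical Jacquet--Langlands correspondence plus the identity $\tr(T_n\mid V_\chi)=\tr(T_n\mid S_k(\Gamma_0(2DN))^{2D\text{-new}})$, which is precisely Theorem~\ref{theorem: JL1}. The paper does the reverse: it proves Theorem~\ref{theorem: JL} directly by establishing a Hecke trace formula for $\Gamma'(D,N)$ (Proposition~\ref{proposition: trace formula}), in which the elliptic contribution is expressed via CM-point counts $|\CM(d;X'(D,N))|$, related back to $|\CM(d;X(D,N))|$ through the degree-$3$ cover $X'(D,N)\to X(D,N)$ (Lemma~\ref{lemma: CM on X'}), and then matched term by term against the right-hand side, computed from the classical Eichler--Selberg formula via Dirichlet-convolution identities on embedding numbers (Lemma~\ref{lemma: convolution}). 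No character decomposition enters the proof of Theorem~\ref{theorem: JL} at all; Theorem~\ref{theorem: JL1} is deduced \emph{afterward} from Theorem~\ref{theorem: JL} together with the equality $\tr(T_n\mid V_\chi)=\tr(T_n\mid V_{\overline\chi})$.

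Your reduction is valid in principle, but the proposal relocates rather than resolves the difficulty. Both routes you offer for $\tr(T_n\mid V_\chi)=\tr(T_n\mid S_k(\Gamma_0(2DN))^{2D\text{-new}})$ are only sketched. The $\chi$-twisted trace formula would require evaluating $\tr([\delta]\circ T_n\mid S_k(\Gamma'(D,N)))$ for $\delta\notin\Gamma'(D,N)$ and then carrying out an embedding-number comparison at least as delicate as the paper's direct case analysis on $d_0\bmod 8$; you do not perform it. The adelic alternative needs a precise local statement at $2$---that the nontrivial $\F_4^\times$-isotypes on the division side correspond exactly, with the right multiplicity, to the depth-zero supercuspidals of conductor $4$ with trivial central character---which is plausible but not supplied. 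Finally, your claim that each contributing automorphic representation appears one-dimensionally in both $V_\chi$ and $V_{\chi^2}$ with the same Hecke eigenvalue is exactly the content of Lemmas~\ref{lemma: c} and~\ref{lemma: self-adjoint} in the paper (the involution $f\mapsto f^c$ interchanges $V_\chi$ and $V_{\overline\chi}$, and self-adjointness forces real eigenvalues); it is not automatic and deserves an argument, e.g.\ via self-duality of $\pi_2$ under trivial central character.
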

	
	Theorem \ref{theorem: JL} can be refined as follows.
	The group $\Gamma'(D,N)$ is a
	normal subgroup of index $3$ of $\Gamma(D,N)$ (see Lemma \ref{lemma:
		Gamma'(D,N)}). Thus, $\Gamma(D,N)$ acts on the space
	$S_k(\Gamma'(D,N))$.
	For a character $\chi$ of the quotient group
	$\Gamma(D,N)/\Gamma'(D,N)$, we let
	$$
	S_k(\Gamma'(D,N),\chi):=\{f\in S_k(\Gamma'(D,N)):
	f\big|_k\alpha=\chi(\alpha)f\text{ for all }\alpha\in
	\Gamma(D,N)\}.
	$$
	Thus, we have a direct sum decomposition
	$$
	S_k(\Gamma'(D,N))=\bigoplus_\chi S_k(\Gamma'(D,N),\chi).
	$$
	It is easy to see that this is an orthogonal decomposition
	with respect to the Petersson inner product and each summand is invariant under Hecke operators.
	When $\chi$ is the trivial character $\chi_0$,
	$S_k(\Gamma'(D,N),\chi_0)$ is the same as $S_k(\Gamma(D,N))$.
	In view of the classical Jacquet-Langlands correspondence for Eichler orders, it is
	natural to guess that 
	\begin{equation} \label{eq: JL1}
		\tr(T_n|S_k(\Gamma'(D,N),\chi))=
		\tr(T_n|S_k(\Gamma_0(2DN))^{2D\text{-new}}
	\end{equation}
	for a nontrivial character $\chi$.
	The next theorem shows that this is indeed the case.
	
	\begin{Theorem} \label{theorem: JL1}
		Let $D$ and $N$ be as above. Let $\chi$ be a nontrivial
		character of the group $\Gamma(D,N)/\Gamma'(D,N)$. Then \eqref{eq:
			JL1} holds for all positive integers $n$ with
		$(n,DN)=1$. Equivalently, for all 
		positive even integers $k$, the two spaces
		$S_k(\Gamma'(D,N),\chi)$ and $S_k(\Gamma_0(2DN))^{2D\text{-new}}$
		are isomorphic as Hecke modules.
	\end{Theorem}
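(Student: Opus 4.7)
The plan is to reduce Theorem \ref{theorem: JL1} to constructing a Hecke-equivariant isomorphism between the two non-trivial character eigenspaces, and then to produce such an isomorphism via an Atkin-Lehner element at the ramified prime $2$. Let $\chi_0$ denote the trivial character of $\Gamma(D,N)/\Gamma'(D,N) \cong \Z/3\Z$ (cf.\ Lemma~\ref{lemma: Gamma'(D,N)}), and let $\chi_1,\chi_2 = \overline{\chi_1}$ be the two non-trivial characters. Set $V_i = S_k(\Gamma'(D,N),\chi_i)$. Since $V_0 = S_k(\Gamma(D,N))$, the classical Jacquet-Langlands correspondence identifies $\tr(T_n|V_0)$ with $\tr(T_n|S_k(\Gamma_0(DN))^{D\text{-new}})$; subtracting this from the trace identity of Theorem~\ref{theorem: JL} gives
\[
\tr(T_n|V_1) + \tr(T_n|V_2) = 2\,\tr(T_n|S_k(\Gamma_0(2DN))^{2D\text{-new}}).
\]
Hence \eqref{eq: JL1} reduces to the equality $\tr(T_n|V_1) = \tr(T_n|V_2)$.

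To establish this equality, I would produce an element $\mu \in \O(D,N)$ with $\nrd(\mu) = 2$ whose $2$-adic image is a uniformizer of $R = \O(D,N)\otimes\Z_2$. Since $2\mid D$ ramifies in $\B_D$, such a $\mu$ exists by standard Atkin-Lehner theory for quaternion orders. It normalizes $\O(D,N)$, and it also normalizes $\O'(D,N)$: at odd primes the two orders agree, while at $2$, conjugation by $\mu$ preserves the two-sided ideal $P$ and hence $\Z_2 + P = \O'(D,N)\otimes\Z_2$. The crucial claim is that conjugation by $\mu$ acts on $\Gamma(D,N)/\Gamma'(D,N)$ as inversion. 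This reduces to the local fact that conjugation by a uniformizer of the division algebra $\B_D\otimes\Q_2$ induces the Frobenius $x\mapsto x^2$ on the residue field $R/P\cong \F_4$, which acts on $\F_4^\times\cong \Z/3\Z$ as $x\mapsto x^{-1}$; the inversion then transfers to the global quotient $\Gamma(D,N)/\Gamma'(D,N)$ via its natural embedding into $R^\times/(\Z_2+P)^\times\cong \F_4^\times$.

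Granting the inversion property, one checks that $f\mapsto f|_k\mu$ sends $V_1$ to $V_2$: for $f\in V_1$ and $\gamma\in\Gamma(D,N)$,
\[
(f|_k\mu)|_k\gamma = \bigl(f|_k(\mu\gamma\mu^{-1})\bigr)|_k\mu = \chi_1(\mu\gamma\mu^{-1})\,(f|_k\mu) = \chi_1(\gamma^{-1})\,(f|_k\mu) = \chi_2(\gamma)\,(f|_k\mu).
\]
Since $\mu$ normalizes $\O'(D,N)$, conjugation by $\mu$ permutes $M(n)$ for $(n,DN)=1$, and a standard coset-representative argument gives $T_n(f|_k\mu) = (T_nf)|_k\mu$. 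Thus $f\mapsto f|_k\mu$ is a Hecke-equivariant isomorphism $V_1\to V_2$, yielding $\tr(T_n|V_1) = \tr(T_n|V_2)$ and completing the proof of \eqref{eq: JL1}. The final Hecke-module isomorphism in the theorem follows from equality of traces of all $T_n$ with $(n,DN)=1$ by standard semisimplicity, since the $T_n$ commute and are self-adjoint under the Petersson inner product on each summand.

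The main obstacle, I expect, is the existence and local analysis of $\mu$: producing it as a global element of $\O(D,N)$, verifying that it normalizes $\O'(D,N)$, and most importantly establishing the inversion property on $\Gamma(D,N)/\Gamma'(D,N)$ via the residue Frobenius. The concrete case $D=6$ with $\mu = 1+i$ provides a useful sanity check: a direct computation confirms that $(1+i)\gamma_3(1+i)^{-1}$ is congruent to $\gamma_3^{-1}$ modulo $\Gamma'(6,1)$, consistent with the claimed local-to-global inversion.
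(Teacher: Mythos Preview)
Your reduction to proving $\tr(T_n\mid V_1)=\tr(T_n\mid V_2)$ is exactly the one the paper makes, but the mechanism you use to establish that equality is genuinely different.

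The paper does not use the Atkin--Lehner element at $2$. Instead it builds an \emph{antiholomorphic} involution $f\mapsto f^c$, where $f^c(\tau)=\overline{(f|_k\sigma)(\overline\tau)}$ for an element $\sigma\in\O'(D,N)$ of reduced norm $-1$. Because $\O(D,N)^\times/\Gamma'(D,N)$ is cyclic of order $6$ (Lemma~\ref{lemma: Gamma'(D,N)}(3)), conjugation by $\sigma$ acts \emph{trivially} on $\Gamma(D,N)/\Gamma'(D,N)$; the swap $\chi\leftrightarrow\overline\chi$ comes entirely from the complex conjugation. Consequently $f\mapsto f^c$ only matches eigenvalues up to complex conjugation, and the paper must separately prove (Lemma~\ref{lemma: self-adjoint}) that the $T_n$ are self-adjoint on each $S_k(\Gamma'(D,N),\chi)$, hence have real eigenvalues, before concluding that the traces agree.

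Your route via $\mu$ of reduced norm $2$ is cleaner in this respect: the map $f\mapsto f|_k\mu$ is $\C$-linear and Hecke-equivariant, so $\tr(T_n\mid V_1)=\tr(T_n\mid V_2)$ follows at once with no appeal to reality of eigenvalues. The price is the local input that conjugation by a uniformizer induces Frobenius on $R/P\simeq\F_4$; this is standard (and your $D=6$ check with $\mu=1+i$ is consistent with it), but the paper sidesteps it entirely. Conceptually your argument also pinpoints the Atkin--Lehner involution at the distinguished ramified prime $2$ as the reason the two nontrivial eigenspaces are isomorphic, which the paper's $f\mapsto f^c$ construction does not make visible.

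Your invocation of self-adjointness at the very end, to upgrade trace equality for all $n$ to a Hecke-module isomorphism, is the same as the paper's; both arguments need semisimplicity there.
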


	\section{Preliminaries}
	
	\subsection{Optimal embeddings and CM-points}
	\label{subsection: CM-points}
	
	Since the trace formulas involve CM-points, we briefly review the
	notion of CM-points and formulas for the number of CM-points on a
	modular curve or a Shimura curve in this section.
	
	Let $\B$ be an indefinite quaternion algebra of discriminant $D$ over
	$\Q$ and $\O$ be an order in $\B$. We fix an embedding
	$\iota$ of $\B$ into $M(2,\R)$. In order for a quadratic number
	field $K$ to be embeddable into $\B$, the necessary and
	sufficient condition is $\JS{K}p\neq1$ for any prime divisor $p$ of
	$D$, where $\JS{K}p$ is the Kronecker symbol. Now suppose that $K$ can
	be embedded into $\B$, say, $\sigma:K\hookrightarrow\B$ is an
	embedding. Then $\sigma(K)\cap\O=\sigma(R)$ for some quadratic order
	$R$ in $K$. Let $d$ be the discriminant of $R$. Then we say $\sigma$
	is an \emph{optimal embedding} of discriminant $d$ into $\O$. We let
	$\Emb(d;\O)$ denote the set of all such embeddings. Note that if
	$\sigma\in\Emb(d;\O)$, then $\gamma\sigma\gamma^{-1}$ also
	belongs to $\Emb(d;\O)$ for $\gamma\in\O^1$, where $\O^1$
	denotes the group of norm-one elements in $\O$.
	
	Now if $K$ is an imaginary quadratic number field, then
	$\iota(\sigma(K))$ has a common fixed point $\tau_\sigma$ on
	$\H$. This point is called a \emph{CM-point} of discriminant $d$.
	It is clear that for $\gamma\in\O^1$, we have
	$\tau_{\gamma\sigma\gamma^{-1}}=\iota(\gamma)\tau_\sigma$. Thus, each
	conjugacy class in $\mathrm{Emb}(d;\O)$ by $\O^1$ determines a unique point on
	$X(\O)$.
	
	\begin{Notation} For a modular curve or a Shimura curve $X$ and a
		negative discriminant $d$, we let $\CM(d;X)$ denote the set of
		CM-points of discriminant $d$ on $X$.
	\end{Notation}
	
	Note, however, that the correspondence between $\Emb(d;\O)/\O^1$ and
	$\CM(d;X(\O))$ is not one-to-one. This is due to the fact that
	if $\sigma\in\Emb(d;\O)$, then $\overline\sigma: K\hookrightarrow\B$ defined by
	$\overline\sigma(a):=\sigma(\overline a)$ is also an optimal embedding
	of discriminant $d$ with the same fixed point
	$\tau_{\overline\sigma}=\tau_\sigma$.
	To get a one-to-one correspondence, we consider the $(2,1)$-entry of
	$\iota(\sigma(\sqrt d))$. A simple computation shows that the
	$(2,1)$-entries of $\iota(\sigma(\sqrt d))$ are either all positive or
	all negative for $\sigma$ in a given conjugacy class of optimal
	embeddings. We say $\sigma$ is \emph{positive} (respectively,
	\emph{negative}) and write $\sigma>0$ (respectively, $\sigma<0$) if
	the $(2,1)$-entry of $\iota(\sigma(\sqrt d))$ is positive
	(respectively, negative). (In some literature, a positive embedding is
	called normalized instead.) We let
	$\Emb^+(d;\O):=\{\sigma\in\Emb(d;\O):\sigma>0\}$. Then the
	correspondence between $\Emb^+(d;\O)/\O^1$ and $\CM(d;X(\O))$ is
	one-to-one.
	
	The determination of the cardinality of $\CM(d;X(\O))$ is usually
	done locally. For a prime $p$, we let $\O_p:=\O\otimes_\Z\Z_p$ and
	similarly let $\Emb(d;\O_p)$ denote the set of optimal embeddings of
	discriminant $d$ into $\O_p$.
	
	\begin{Lemma}[{\cite[Theorem 30.7.3]{Voight-book}}]
		\label{lemma: |CM| 0}
		With the notations given as above, let
		$e(d;\O_p)=|\Emb(d;\O_p)/\O_p^\times|$. Then
		$$
		|\CM(d;X(\O))|=h(d)\prod_p e(d;\O_p),
		$$
		where $h(d)$ is the class number of the order of discriminant $d$ in $K$.
	\end{Lemma}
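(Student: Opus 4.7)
The plan is to reduce the count of CM-points to one of positive optimal embedding classes and then evaluate the latter by an adelic argument. Starting from the bijection $\CM(d;X(\O))\leftrightarrow\Emb^+(d;\O)/\O^1$ established just above the lemma, it suffices to prove
$$
|\Emb^+(d;\O)/\O^1| = h(d)\prod_p e(d;\O_p).
$$
Since $\B$ is indefinite, $\O^\times$ contains elements of reduced norm $-1$, and conjugation by any such element swaps positive and negative embeddings; thus the left-hand side equals $|\Emb(d;\O)/\O^\times|$, the number of unoriented $\O^\times$-conjugacy classes of optimal embeddings of discriminant $d$.

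Next, I would pass to the adelic picture. Writing $\hat\O = \prod_p \O_p$ and $\hat\O^\times = \prod_p \O_p^\times$, restriction gives a natural map
$$
\Emb(d;\O)/\O^\times \longrightarrow \Emb(d;\hat\O)/\hat\O^\times = \prod_p \Emb(d;\O_p)/\O_p^\times.
$$
Because $\B$ is indefinite, strong approximation holds for the reduced norm-one group $\B^1$ relative to the archimedean place. This is the crucial input: it guarantees that the displayed map is surjective and that its fibres are governed by a single global class group, rather than by anything depending on the algebra $\B$.

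The third step is to identify these fibres with the Picard group of the quadratic order $R$ of discriminant $d$ in $K$. Any two optimal embeddings that are adelically conjugate have isomorphic source rings, and fixing one embedding as a reference point the obstruction to globally conjugating a second embedding into it is naturally measured by a class in $\hat R^\times\backslash\hat K^\times/K^\times \cong \mathrm{Pic}(R)$, a finite group of order $h(d)$. Putting the three steps together yields the product formula of the lemma.

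The main obstacle is the careful bookkeeping in the adelic-to-global step: one must verify that strong approximation delivers exactly the fibre description above, and in particular that the three different quotients involved --- by $\O^1$ globally, by $\O^\times$ globally, and by $\hat\O^\times$ at each finite place --- fit together without introducing a spurious factor of $2$. The orientation discussion preceding the lemma is what justifies the initial replacement of $\Emb^+/\O^1$ by $\Emb/\O^\times$, and reconciling this with the local definition of $e(d;\O_p)$ is the subtle part of the argument.
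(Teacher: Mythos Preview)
The paper does not supply a proof of this lemma: it is stated with a citation to \cite[Theorem~30.7.3]{Voight-book} and used as a black box. So there is no ``paper's own proof'' to compare against. Your outline is the standard adelic argument and is essentially how the cited reference proceeds: one replaces the oriented count $|\Emb^+(d;\O)/\O^1|$ by the unoriented count $|\Emb(d;\O)/\O^\times|$ using an element of reduced norm $-1$ (available because $\B$ is indefinite over $\Q$), then uses strong approximation for $\B^1$ together with the fact that the class number of $\O$ is $1$ in this setting to identify each fibre of the local-global map with $\mathrm{Pic}(R)\cong\hat R^\times\backslash\hat K^\times/K^\times$, of order $h(d)$. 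Your closing caveat about reconciling the quotients by $\O^1$, by $\O^\times$, and by $\O_p^\times$ is exactly the point that needs care, and you have identified it correctly.
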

	
	Define the Eichler symbol $\left\{\frac
	dp\right\}$ by 
	$$
	\left\{\frac dp\right\}=\begin{cases}
		\JS{d_0}p, &\text{if }p\nmid f, \\
		1, &\text{if }p|f, \end{cases}
	$$
	where $\JS{d_0}p$ is the Kronecker symbol. We now record formulas for $e(d;\O_p)$ relevant to our discussion.
	
	\begin{Lemma}[{\cite[Proposition 5, Chapter II]{Eichler}}]
		\label{lemma: |CM| 1}
		Let $\O(D,N)$ be an Eichler order of level $N$ in the indefinite
		quaternion algebra of discriminant $D$ over $\Q$ ($D=1$ allowed).
		Given a negative discriminant $d$, we write $d$ as
		$d=f^2d_0$, where $d_0$ is a fundamental discriminant and $f$ is a
		positive integer. 
		\begin{enumerate}
			\item If $p|D$, then $e(d;\O(D,N)_p)=1-\left\{\frac dp\right\}$.
			\item If $p\|N$, then $e(d;\O(D,N)_p)=1+\left\{\frac dp\right\}$.
		\end{enumerate}
	\end{Lemma}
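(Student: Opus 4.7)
\smallskip

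\noindent\textbf{Proof proposal.} My plan is to reduce $e(d;\O(D,N)_p)$ to a local orbit count by classifying $\O_p:=\O(D,N)_p$ explicitly as a $\Z_p$-order in $\B_D\otimes\Q_p$ and then counting orbits of the conjugation action of $\O_p^\times$ on embeddings $\sigma:R'\hookrightarrow\O_p$, where $R'$ is the $\Z_p$-order of discriminant $d=f^2d_0$ inside $K_p:=K\otimes\Q_p$. Skolem--Noether implies that, whenever $K_p$ embeds at all into $\B_D\otimes\Q_p$, all such field embeddings form a single $(\B_D\otimes\Q_p)^\times$-conjugacy class; the problem is therefore to intersect that class with $\O_p$ and count $\O_p^\times$-orbits subject to the optimality constraint $\sigma(K_p)\cap\O_p=\sigma(R')$.

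For part (1), with $p\mid D$, $\O_p$ is the valuation ring of the division algebra $\B_D\otimes\Q_p$, equal to $W+W\pi$ with $W$ the ring of integers of the unramified quadratic extension of $\Q_p$ and $\pi^2\in pW^\times$. Because $\O_p$ is the integral closure of $\Z_p$ in the division algebra, every field embedding of $K_p$ lands in $\O_p$ with image equal to the full ring of integers $\O_{K_p}$. Hence the optimality condition forces $R'=\O_{K_p}$, i.e.\ $p\nmid f$; when $p\mid f$ this gives $0$ embeddings, matching $1-\{d/p\}=0$. For $p\nmid f$ one computes directly: a split $K_p$ does not embed in a division algebra ($0$ embeddings); a ramified $K_p$ is conjugate into $\Q_p(\pi)$ with a unique $\O_p^\times$-orbit ($1$ embedding); an unramified $K_p$ is conjugate into $W$ and yields two orbits because conjugation by $\pi$ realises the nontrivial Frobenius of $W/\Z_p$ ($2$ embeddings). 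These three values together equal $1-\JS{d_0}{p}$.

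For part (2), with $p\parallel N$, I would take $\O_p=\SM{\Z_p}{\Z_p}{p\Z_p}{\Z_p}$ inside $M_2(\Q_p)$, realised as the intersection of the two adjacent maximal orders $\Lambda_0=M_2(\Z_p)$ and $\Lambda_1=\mathrm{diag}(1,p)\Lambda_0\mathrm{diag}(1,p)^{-1}$ on the Bruhat--Tits tree of $\mathrm{PGL}_2(\Q_p)$. An optimal embedding $\sigma:R'\hookrightarrow\O_p$ then corresponds to an embedding of $K_p$ whose image stabilises the edge $\{\Lambda_0,\Lambda_1\}$. For $p\nmid f$: split $K_p$ has an axis through this edge and contributes $2$; inert $K_p$ has a unique fixed vertex and contributes $0$; ramified $K_p$ stabilises a vertex together with a single incident edge and contributes $1$. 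For $p\mid f$, a direct matrix computation shows that $\sigma(\sqrt d)=p\,\sigma_0(\sqrt{d_0})$ for either of the two maximal-order embeddings $\sigma_0$ lands in $\O_p$ optimally, giving two orbits and matching the value $1+\{d/p\}=2$.

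The main technical obstacle is the conductor bookkeeping when $p\mid f$ in case (2): one must verify that the embedding produced from $p\sigma_0$ really satisfies $\sigma(K_p)\cap\O_p=\sigma(R')$ rather than a larger suborder, and that the two resulting orbits are genuinely distinct under $\O_p^\times$-conjugation. Both checks reduce to explicit matrix manipulations using the shape of $\O_p$ and the fact that the two maximal orders $\Lambda_0,\Lambda_1$ are not conjugate inside $\O_p$; combined with the geometric orbit counts above they yield the formulas stated in the lemma.
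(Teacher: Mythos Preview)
The paper does not prove this lemma; it is stated with a citation to Eichler and used as a black box. So there is no ``paper's own proof'' to compare against. What you have sketched is essentially the standard argument one finds in Eichler, Hijikata, or Voight, and it is correct in outline.

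A few remarks on details. Your treatment of part~(1) is fine: maximality of $\O_p$ forces any embedding to hit the full ring of integers of $K_p$, so $p\mid f$ gives $0$ and the three splitting types for $p\nmid f$ give $0,1,2$ as you say. In part~(2), the Bruhat--Tits argument for $p\nmid f$ is also correct.

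The one place where your sketch is genuinely loose is the $p\mid f$ subcase of~(2). Writing $\sigma(\sqrt d)=p\,\sigma_0(\sqrt{d_0})$ is only literally correct when $p\,\|\,f$; in general $\sqrt d=f\sqrt{d_0}$ with $v_p(f)\ge 1$ arbitrary, and the optimality check must be done for the order $\Z_p+p^{v_p(f)}\O_{K_p}$, not just $\Z_p+p\,\O_{K_p}$. Also, ``either of the two maximal-order embeddings $\sigma_0$'' is ambiguous: there need not be exactly two $\Lambda_i^\times$-orbits of optimal embeddings of $\O_{K_p}$ into a single maximal order $\Lambda_i$ (there is one when $K_p$ is unramified, two when split, one when ramified). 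The cleaner way to get the uniform answer $2$ when $p\mid f$ is either (i)~a direct matrix computation with representatives $\SM{0}{1}{p\cdot u}{0}$ and conjugation by $\O_p^\times$, checking optimality against the shape of $\O_p$, or (ii)~the standard recursion relating $e(p^2d';\O_p)$ to $e(d';\O_p)$ for the level-$p$ Eichler order. You flagged this as the main technical obstacle, which is accurate; just be aware that the two-line justification you gave does not yet pin it down.
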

	
	The case $p^2|N$ is more complicated. For our purpose, we only need
	the formula for the case $p=2$ and $4\|N$.
	
	\begin{Lemma}[{\cite[Theorem 2]{Ogg}}] \label{lemma: |CM| 2}
		Let $M$ be an odd positive integer. Given a negative
		discriminant $d$, write $d$ as $d=f^2d_0$, where $d_0$ is a
		fundamental discriminant and $f$ is a positive integer. Then
		$$
		e(d;\O(1,4M)_2)=\begin{cases}
			0, &\text{if }4|d_0,2\nmid f, \\
			3, &\text{if }4|d_0,2|f, \\
			1+\JS{d_0}2, &\text{if }d_0\equiv1\mod 4, 2\nmid f, \\
			3+\JS{d_0}2, &\text{if }d_0\equiv1\mod 4, 2\|f, \\
			3, &\text{if }d_0\equiv1\mod 4, 4|f. \end{cases}
		$$
	\end{Lemma}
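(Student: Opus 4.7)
The strategy is to compute $e(d;\O(1,4M)_2)$ directly inside $M_2(\Q_2)$, where $\O(1,4M)\otimes_\Z\Z_2$ is conjugate to the local Eichler order of level $4$,
$$
\O_0(4)_2=\left\{\begin{pmatrix} a & b \\ 4c & d' \end{pmatrix} : a,b,c,d'\in\Z_2\right\},
$$
with unit group consisting of matrices having $a,d'\in\Z_2^\times$. I would fix a generator $\alpha$ of $R\otimes\Z_2$ with minimal polynomial $x^2-tx+n$, $t^2-4n=d=f^2d_0$, encode an optimal embedding $\sigma$ by the matrix $\sigma(\alpha)\in\O_0(4)_2$ subject to the trace, determinant, and optimality conditions $\Z_2+\Z_2\sigma(\alpha)=\sigma(R\otimes\Z_2)$, and then count the resulting $\O_0(4)_2^\times$-conjugacy classes.

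The cleanest formulation uses the Bruhat--Tits tree $\mathcal T$ of $\mathrm{PGL}_2(\Q_2)$: the order $\O_0(4)_2$ is the pointwise stabilizer of a path $v_0, v_1, v_2$ of length $2$, while $\sigma(K\otimes\Q_2)$ acts on $\mathcal T$ with fixed subcomplex determined by how $2$ splits in $K$, namely a full apartment if $K\otimes\Q_2$ is split ($d_0\equiv 1\pmod 8$), a single vertex if $K\otimes\Q_2$ is an unramified field ($d_0\equiv 5\pmod 8$), and the midpoint of a single edge if $K\otimes\Q_2$ is a ramified field ($4\mid d_0$). The suborder $R\otimes\Z_2$ of conductor $f$ then stabilizes a thickened neighborhood of this fixed locus whose radius in the tree is $v_2(f)$. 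Under this dictionary, optimal embeddings correspond bijectively to length-$2$ paths in $\mathcal T$ whose pointwise stabilizer inside $\sigma(K)$ equals $\sigma(R\otimes\Z_2)$, up to the $\O_0(4)_2^\times$-action.

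Case-by-case enumeration then recovers the stated counts. In the split case ($d_0\equiv 1\pmod 8$) one counts length-$2$ subpaths of the fixed apartment and finds $2$, $4$, or $3$ as $v_2(f)=0$, $1$, or $\ge 2$. In the inert case ($d_0\equiv 5\pmod 8$) the fixed single vertex annihilates the count when $2\nmid f$ and gives $2$ and $3$ as $f$ grows. In the ramified case ($4\mid d_0$) the fixed midpoint cannot support a pointwise-fixed length-$2$ path when $2\nmid f$, while the enlarged neighborhood at $2\mid f$ contains exactly three such paths.

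The main obstacle is the careful bookkeeping of optimality in off-center configurations: when the fixed locus of $\sigma(K\otimes\Q_2)$ meets the path $v_0, v_1, v_2$ asymmetrically, the $2$-adic conductor computed from each endpoint can differ, and one must ensure that $\sigma(R\otimes\Z_2)$ is \emph{exactly} the full intersection of $\sigma(K)$ with the path stabilizer, not merely contained in it. This off-center analysis is what separates the counts $3+\JS{d_0}{2}$ and $3$ in the transition from $2\|f$ to $4\mid f$, and is precisely what Ogg's local analysis at $p=2$ makes rigorous.
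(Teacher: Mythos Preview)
The paper does not give a proof of this lemma: it is quoted verbatim from Ogg \cite{Ogg} and then used as a black box in the computation of $(\delta\ast g)(4)$ in the proof of Lemma~\ref{lemma: convolution}(4). There is therefore no argument in the paper against which to compare your proposal.

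That said, your Bruhat--Tits tree approach is a standard and legitimate way to organise local optimal-embedding counts, and the numbers you extract in each case --- $2,4,3$ for $d_0\equiv 1\pmod 8$, $0,2,3$ for $d_0\equiv 5\pmod 8$, and $0,3$ for $4\mid d_0$ --- agree with the stated formula. What you have written, however, is an outline rather than a proof: the precise bijection between $\O_0(4)_2^\times$-classes of optimal embeddings and length-$2$ paths with the prescribed pointwise stabiliser in $\sigma(K)$ must be set up carefully, and the optimality bookkeeping you yourself flag as ``the main obstacle'' is exactly where the content lies. If you fill that in rigorously you will have an independent proof; the paper itself simply defers to Ogg.
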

	
	\subsection{The Shimura curve $X'(D,N)$}
	
	In this section, we collect some properties of the group
	$\Gamma'(D,N)$ and the Shimura curve $X'(D,N)$ that will be
	needed later on. 
	\begin{Lemma}\label{lemma: Gamma'(D,N)}
		\begin{enumerate}
			\item An element $\alpha$ of $\O(D,N)$ is contained in $\O'(D,N)$
			if and only if the reduced trace $\trd(\alpha)$ is even. Also, an element $\alpha$
			of $\Gamma(D,N)$ is contained in $\Gamma'(D,N)$ if and only if
			$\nrd(\alpha-1)$ is even.
			\item We have $\Gamma'(D,N)\lhd\Gamma(D,N)$ and
			$[\Gamma(D,N):\Gamma'(D,N)]=3$.
			\item Let $\O(D,N)^\times$ be the unit group of $\O(D,N)$. Then $\Gamma'(D,N)\lhd\O(D,N)^\times$ and $\O(D,N)^\times/\Gamma'(D,N)$ is cyclic of order $6$.
			\item The reduced norm map $\nrd:\O'(D,N)\to\Z$ is surjective.
			\item Let $p$ be a prime not dividing $DN$. Suppose that $\gamma_1$ and $\gamma_2$ are two elements of reduced norm $p$ in $\O'(D,N)$. Then there are elements $\alpha$ and $\beta$ in $\Gamma(D,N)$ such that $\gamma_1=\alpha\gamma_2\beta$ and $\alpha\beta\in\Gamma'(D,N)$.
		\end{enumerate}
	\end{Lemma}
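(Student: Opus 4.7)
The plan is to translate all five statements into the local picture at $2$: since $\O'(D,N)_p=\O(D,N)_p$ for odd primes $p$, the entire content concentrates at $p=2$, where $\O'(D,N)_2=\Z_2+P$ is the preimage in $R:=\O(D,N)_2$ of $\F_2\subset R/P\simeq\F_4$. The five parts are carried out in sequence, with strong approximation for $\B_D^1$ as the one global input.

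For \textbf{(1)}, the reduced trace $R\to\Z_2$ reduces modulo $P$ to the Frobenius trace $\F_4\to\F_2$, $x\mapsto x+x^2$, whose kernel is precisely $\F_2$; hence $\alpha\in\Z_2+P$ iff $\trd(\alpha)\in 2\Z_2$. The second half follows from $\nrd(\alpha-1)=\nrd(\alpha)-\trd(\alpha)+1=2-\trd(\alpha)$ when $\nrd(\alpha)=1$. For \textbf{(2)}, both $\Z_2$ (center) and $P$ (Jacobson radical) are stable under $R^\times$-conjugation, so $\Z_2+P$ is a normal suborder, and globalizing yields $\Gamma'(D,N)\triangleleft\Gamma(D,N)$. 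Since $(\Z_2+P)^\times=1+P$ (as $\Z_2^\times\subset 1+P$), the residue map identifies $R^\times/(\Z_2+P)^\times$ with $\F_4^\times\simeq\Z/3\Z$, so $[\Gamma(D,N):\Gamma'(D,N)]$ divides $3$. For equality I would invoke strong approximation for $\B_D^1$ (valid since $\B_D$ is indefinite): an element $\omega\in R$ satisfying $\omega^2+\omega+1=0$ (obtained from the unramified quadratic subring) has reduced norm $1$ and residue of order $3$ in $\F_4^\times$, and strong approximation applied to the open subgroup $((1+P)\cap R^1)\times\prod_{p\neq 2}\O(D,N)_p^1$ of $(\B_D\otimes\mathbb{A}_f)^1$ supplies $\gamma\in\Gamma(D,N)$ with $\gamma\equiv\omega\pmod{P}$ at $2$.

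Parts \textbf{(3)} and \textbf{(4)} then follow formally from (1) and (2). For (3): indefiniteness together with Eichler's norm theorem gives $\nrd(\O(D,N)^\times)=\{\pm 1\}$, whence $[\O(D,N)^\times:\Gamma'(D,N)]=6$; starting with any norm-$(-1)$ unit $\epsilon_0$ and, if $\overline{\epsilon_0}=1$, replacing it by $\epsilon_0\gamma$ (with $\gamma$ from (2)), one obtains a norm-$(-1)$ unit whose residue has order $3$, hence of order $6$ in the quotient, proving cyclicity. For (4), given $n\in\Z$, the Hasse--Schilling norm theorem produces $\alpha\in\O(D,N)$ with $\nrd(\alpha)=n$; if $n$ is even then $v_2(\nrd(\alpha))\ge 1$ forces $\alpha\in P$ at $2$, so $\alpha\in\O'(D,N)$ already; if $n$ is odd, then $\alpha\in R^\times$, and taking $\gamma\in\Gamma(D,N)$ with $\bar\gamma=\bar\alpha^{-1}$ in $\F_4^\times$ (which exists by (2)) yields $\gamma\alpha\in\O'(D,N)$ of reduced norm $n$.

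Finally, for \textbf{(5)}, the well-known fact that for $p\nmid DN$ the reduced-norm-$p$ elements of $\O(D,N)$ form a single $\Gamma(D,N)$-double coset (via Smith normal form in the split local order $\O(D,N)_p$) provides $\alpha,\beta\in\Gamma(D,N)$ with $\gamma_1=\alpha\gamma_2\beta$. Since $p$ is odd, $\gamma_1,\gamma_2\in R^\times$, and by (1) their residues lie in $\F_2^\times=\{1\}$; reducing $\gamma_1=\alpha\gamma_2\beta$ modulo $P$ therefore forces $\bar\alpha\bar\beta=\bar\gamma_1\bar\gamma_2^{-1}=1$, whence $\alpha\beta\in\O'(D,N)$, and since $\nrd(\alpha\beta)=1$ we conclude $\alpha\beta\in\Gamma'(D,N)$. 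The technical crux of the argument is the surjectivity step in (2): this is the only place where strong approximation (a genuinely global input) is essential, and once the existence of an element of $\Gamma(D,N)$ with odd reduced trace is secured, parts (3)--(5) reduce to routine manipulations of residues in $\F_4$.
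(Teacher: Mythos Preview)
Your proof is correct and follows essentially the same strategy as the paper: reduce everything to the local picture at $2$, identify $\O'(D,N)$ with the preimage of $\F_2$ under $R\to R/P\simeq\F_4$, and invoke strong approximation to produce a norm-one element of odd reduced trace for the index-$3$ statement in (2). The only differences are tactical: in (3) the paper deduces cyclicity from the existence of the normal subgroup $\O'(D,N)^\times/\Gamma'(D,N)$ of order $2$ (ruling out $S_3$), whereas you exhibit an element of order $6$; and in (4) the paper observes that for $\alpha\in\Gamma(D,N)\setminus\Gamma'(D,N)$ the sum $\trd(\gamma)+\trd(\alpha\gamma)+\trd(\overline\alpha\gamma)=(1+\trd(\alpha))\trd(\gamma)$ is even, so one of $\gamma,\alpha\gamma,\overline\alpha\gamma$ lies in $\O'(D,N)$, while you split on the parity of $n$ and correct via the residue map---both arguments are short and valid.
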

	
	\begin{proof} Recall that the unique division quaternion algebra over
		$\Q_2$ can be realized as $\JS{-1,-1}{\Q_2}$. The unique maximal
		$\Z_2$-order of $\JS{-1,-1}{\Q_2}$ is
		$R:=\Z_2+\Z_2i+\Z_2j+\Z_2(1+i+j+ij)/2$. Its maximal ideal is
		$P=(i+j)=\{a_0+a_1i+a_2j+a_3ij:a_m\in\Z_2,a_0+\cdots+a_3\text{ is
			even}\}$ and the suborder $\Z_2+P$ of index $2$ in the maximal
		order is $R':=\Z_2+\Z_2i+\Z_2j+\Z_2ij$. Therefore, we
		have $\B_D\otimes_\Q\Q_2\simeq\JS{-1,-1}{\Q_2}$, and
		the images of $\O(D,N)\otimes_\Z\Z_2$ and $\O'(D,N)\otimes_\Z\Z_2$
		under the isomorphism are $R$ and $R'$, respectively. From this, we
		immediately see that an element $\alpha$ of $\O(D,N)$ is contained
		in $\O'(D,N)$ if and only if $\trd(\alpha)$ is even. 
		
		Now suppose $\alpha\in\Gamma(D,N)$. We have
		$\nrd(\alpha-1)=\nrd(\alpha)-\trd(\alpha)+1=2-\trd(\alpha)$. By the
		characterization of elements of $\Gamma'(D,N)$ above,
		we see that $\alpha$ is in $\Gamma'(D,N)$ if and only if
		$\nrd(\alpha-1)$ is even. 
		
		We now prove Part (2). We regard $\O(D,N)$ as a subring of $R$. Observe that $R/P\simeq\F_4$ and the only elements $a$ of $\F_4$ such that $\operatorname{tr}_{\F_4/\F_2}(a)=0$ are those elements in $\F_2$. Consequently, by Part (1), an element $\gamma$ of $\O(D,N)$ is in $\O'(D,N)$ if and only if $\gamma\equiv0,1\mod P$. In particular,
		an element $\gamma$ of $\Gamma(D,N)$ is in $\Gamma'(D,N)$ if and only if
		$\gamma\equiv 1\mod P$. In other words, $\Gamma'(D,N)$ is the kernel
		of the reduction homomorphism 
		$$\Gamma(D,N)\longrightarrow(R/P)^\times \quad \mbox{defined
			by} \quad  \alpha\mapsto\alpha\mod P.
		$$ Thus, $\Gamma'(D,N)\lhd\Gamma(D,N)$
		and the index of $\Gamma'(D,N)$ in $\Gamma(D,N)$ is either $1$ or
		$3$. In view of Part (1), we only need to show that $\Gamma(D,N)$
		has an element of odd trace.
		
		Recall that, as a consequence of the strong approximation
		for Eichler orders in an indefinite quaternion algebra over $\Q$,
		the reduction map $\Gamma(D,N)\mapsto(\O(D,N)/2\O(D,N))^1$ is
		surjective (see Theorem 28.2.11 of \cite{Voight-book}), where
		$(\O(D,N)/2\O(D,N))^1$ denotes the group of elements
		$\alpha+2\O(D,N)$ such that $\nrd(\alpha)\equiv 1\mod 2$. Now it is
		easy to see that the embedding $(\O(D,N)/2\O(D,N))^1\hookrightarrow
		(R/2R)^1$ is actually an isomorphism. Since $R$ has an element
		$(1+i+j+ij)/2$ of norm $1$ and trace $1$, we see that $\Gamma(D,N)$
		has an element of odd trace. This completes the proof of the lemma.

		To prove Part (3), we first note that, by Part (2), $[\O(D,N)^\times:\Gamma'(D,N)]=[\O(D,N)^\times:\Gamma(D,N)][\Gamma(D,N):\Gamma'(D,N)]=6$. Moreover, using the characterization of elements of $\O'(D,N)$ given in Part (1), we easily see that $\Gamma'(D,N)$ and $\O'(D,N)^\times$ are both normal subgroups of $\O(D,N)^\times$. Now the proof of Part (2) can also be used to show that $\O'(D,N)^\times$ is a subgroup of $\O(D,N)^\times$ of index $3$.
		Thus, $\O(D,N)^\times/\Gamma'(D,N)$ is a group of order $6$ having
		and a normal subgroup $\O'(D,N)/\Gamma'(D,N)$ of index $3$. Therefore, $\O(D,N)^\times/\Gamma'(D,N)$ is cyclic of order $6$.
		
		We next prove Part (4).
		By the strong approximation theorem for the Eichler order $\O(D,N)$, the reduced norm map $\nrd:\O(D,N)\to\Z$ is surjective for $\O(D,N)$.
		For an integer $n$, let $\gamma$ be an element of reduced norm $n$ in $\O(D,N)$. Let $\alpha$ be an element of $\Gamma(D,N)$ not in $\Gamma'(D,N)$. Then $1$, $\alpha$, and $\overline\alpha$ form a complete set of coset representatives of $\Gamma'(D,N)$ in $\Gamma(D,N)$. By Part (1), $\trd(\alpha)$ is odd.
		Thus, $1+\alpha+\overline\alpha$ is an even integer.
		It follows that $\tr(\gamma+\alpha\gamma+\overline\alpha\gamma)$ is an even integer. Consequently, at least one of $\gamma$, $\alpha\gamma$, and $\overline\alpha\gamma$ has an even trace. This element of even trace is an element of reduced norm $n$ in $\O'(D,N)$, by Part (1) again.
		
		We now prove Part (5). Since $\gamma_1$ and $\gamma_2$ are both elements of reduced norm $p$ in the Eichler order $\O(D,N)$, by the strong approximation theorem, there exist elements $\alpha$ and $\beta$ in $\Gamma(D,N)$ such that $\gamma_1=\alpha\gamma_2\beta$. To prove that $\alpha\beta\in\Gamma'(D,N)$, we regard $\O(D,N)$ as a subring of $R$ as in the proof of Part (2). Then the images of $\gamma_1$ and $\gamma_2$ under the reduction homomorphism $R\to R/P\simeq\F_4$ are both $1$. Therefore,
		the image of $\alpha\beta$ under the homomorphism is also $1$. Consequently, $\alpha\beta\in\Gamma'(DN)$. This completes the proof of the lemma.
	\end{proof}
	
	\begin{Lemma} \label{lemma: CM on X'}
		\begin{enumerate}
			\item The covering $X'(D,N)\to X(D,N)$ has degree $3$.
			The branch points of the covering are exactly the elliptic points
			of order $3$ (if such elliptic points exist).
			\item Let $d$ be a negative discriminant. If $d\equiv1\mod 4$, then
			$$
			|\CM(d;X'(D,N))|=0.
			$$
			If $d\equiv0\mod 4$, then
			$$
			|\CM(d;X'(D,N))|=\begin{cases}
				|\CM(-3;X(D,N))|, &\text{if }d=-12, \\
				3|\CM(d/4;X(D,N))|, &d/4\equiv1\mod 4\text{ and }d\neq-12,\\
				3|\CM(d;X(D,N))|, &\text{else}.\end{cases}
			$$
		\end{enumerate}
	\end{Lemma}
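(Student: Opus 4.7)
For Part (1), the degree is $[\Gamma(D,N):\Gamma'(D,N)]=3$ by Lemma \ref{lemma: Gamma'(D,N)}(2). I would then use covering theory: $\tau\in X(D,N)$ is a branch point of $X'(D,N)\to X(D,N)$ iff the stabilizer of $\tau$ in $\Gamma(D,N)/\{\pm 1\}$ is not contained in $\Gamma'(D,N)/\{\pm 1\}$. The nontrivial such stabilizers are cyclic of order $2$ or $3$ (elliptic points). An order-$2$ generator $\alpha$ satisfies $\alpha^2=-1$, so $\trd(\alpha)=0$, hence $\alpha\in\Gamma'(D,N)$ by Lemma \ref{lemma: Gamma'(D,N)}(1). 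An order-$3$ generator satisfies $\alpha^3=\pm 1$, so $\trd(\alpha)=\mp 1$, hence $\alpha\notin\Gamma'(D,N)$. Consequently the branch points are exactly the elliptic points of $X(D,N)$ of order $3$, and each unramified point of $X(D,N)$ acquires three preimages in $X'(D,N)$ while each ramified point acquires one.

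For Part (2), the plan is to compare the orders $\sigma(K)\cap\O(D,N)$ and $\sigma(K)\cap\O'(D,N)$ for an embedding $\sigma:K\hookrightarrow\B_D$. Since $\O'(D,N)$ is the even-trace subring of $\O(D,N)$ by Lemma \ref{lemma: Gamma'(D,N)}(1), the latter intersection is the even-trace subring of the former. If $\sigma(K)\cap\O(D,N)$ has discriminant $d^*=f^2 d_0$ with $d_0$ a fundamental discriminant, then its standard generator $\omega$ has trace $f$ when $d_0\equiv 1\mod 4$ and trace $0$ when $d_0\equiv 0\mod 4$. Thus $\sigma(K)\cap\O'(D,N)=\sigma(K)\cap\O(D,N)$ whenever $d^*\not\equiv 1\mod 4$, while $\sigma(K)\cap\O'(D,N)=\Z+\Z(2\omega)$ has discriminant $4d^*$ when $d^*\equiv 1\mod 4$. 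In other words, every optimal embedding of discriminant $d$ into $\O'(D,N)$ extends uniquely to an optimal embedding of discriminant $d^*\in\{d,d/4\}$ into $\O(D,N)$, with the alternative $d^*=d/4$ occurring only when $d/4\equiv 1\mod 4$. In particular $|\CM(d;X'(D,N))|=0$ for $d\equiv 1\mod 4$, since no such $d^*$ produces this $d$.

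For $d\equiv 0\mod 4$, the two possible sources contributing to $\CM(d;X'(D,N))$ are $d^*=d$ (always) and $d^*=d/4$ (when $d/4\equiv 1\mod 4$). Combined with Part (1), each CM-point of $X(D,N)$ of discriminant $d^*$ lifts to one preimage if $d^*=-3$ (ramified) and three preimages otherwise. The three sub-cases in the statement then follow by isolating when certain $|\CM(d^*;X(D,N))|$ vanish. For $d=-12$ the conductor of $-12$ is $2$, so $\{-12/2\}=1$ and Lemma \ref{lemma: |CM| 1}(1) gives $|\CM(-12;X(D,N))|=0$, leaving only the ramified source $d^*=-3$. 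For $d/4\equiv 1\mod 4$ with $d\neq -12$, writing $d=(2f)^2 d_0$ with $d_0\equiv 1\mod 4$ and $f$ odd shows $d$ has even conductor, so again $|\CM(d;X(D,N))|=0$, and only the unramified source $d^*=d/4$ contributes $3|\CM(d/4;X(D,N))|$. In the remaining cases only $d^*=d$ contributes. The main obstacle is this local bookkeeping at $p=2$, together with a check that conjugation by $\Gamma(D,N)$ preserves both intersections (so that all three preimages of a given CM-point share the predicted discriminant), which follows because $\Gamma(D,N)\subset\O(D,N)^\times$ normalizes $\O(D,N)$ and preserves reduced traces.
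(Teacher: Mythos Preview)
Your argument is correct and follows essentially the same route as the paper: both hinge on Lemma~\ref{lemma: Gamma'(D,N)}(1), which identifies $\O'(D,N)$ as the even-trace sublattice of $\O(D,N)$, and then compare the discriminants of $\sigma(K)\cap\O(D,N)$ and $\sigma(K)\cap\O'(D,N)$ to obtain the rule $d_2=d_1$ if $d_1\equiv0\bmod4$ and $d_2=4d_1$ if $d_1\equiv1\bmod4$, together with the vanishing $|\CM(d;X(D,N))|=0$ when $d/4\equiv1\bmod4$ (even conductor, Eichler symbol $1$ at $p=2$). The only organizational difference is that the paper deduces the branching statement in Part~(1) \emph{from} the discriminant analysis in Part~(2) (elliptic points of order $3$ are CM of discriminant $-3\equiv1\bmod4$, hence ramified), whereas you establish Part~(1) first by directly computing $\trd(\alpha)$ for order-$2$ and order-$3$ elliptic elements; these are two faces of the same computation.
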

	
	\begin{proof}
		The assertion that $X'(D,N)\to X(D,N)$ has degree $3$ follows from
		Lemma \ref{lemma: Gamma'(D,N)}(1). The branch points of the covering
		can only occur possibly at elliptic points of $X(D,N)$.
		To determine which elliptic points are branch points, we use the
		result in Part (2), which we prove now.

		Let $\phi:K\hookrightarrow\B_D$ be an
		embedding of imaginary quadratic number field $K$ into $\B_D$.
		Let $d_1$ and $d_2$ be the discriminants of $\phi$ as an 
		optimal embedding into $\O(D,N)$ and $\O'(D,N)$, respectively.
		Let us analyze the relation between $d_1$ and $d_2$.
		
		If $d_1\equiv0\mod 4$, then
		$\phi(K)\cap\O(D,N)=\phi(\Z[\sqrt{d_1}/2])$. Since every element in
		$\phi(\Z[\sqrt{d_1}/2])$ has an even trace, by Lemma \ref{lemma:
			Gamma'(D,N)}, $\phi(K)\cap\O'(D,N)$ is equal to
		$\phi(\Z[\sqrt{d_1}/2])$. Thus, $d_2=d_1$ when $d_1\equiv0\mod4$.
		If $d_1\equiv1\mod 4$, then $\phi(K)\cap\O(\Z[(1+\sqrt{d_1})/2])
		=\phi(\Z[\sqrt{d_1}])$. Thus, $d_2=4d_1$ when $d_1\equiv0\mod 4$.
		
		The discussion above shows that every point on $X'(D,N)$ that is
		mapped to a CM-point of discriminant $d_1$ on $X(D,N)$ in the
		covering 
		$X'(D,N)\to X(D,N)$ is a CM-point of discriminant
		$$
		\begin{cases}
			4d_1, &\text{if }d_1\equiv1\mod 4,\\
			d_1, &\text{if }d_1\equiv 0\mod 4. \end{cases}
		$$
		This in particular shows that elliptic points of order $3$ (i.e.,
		CM-points of discriminant $-3$) on $X(D,N)$ are branch
		points of the covering $X'(D,N)\to X(D,N)$, and elliptic points of
		order $2$ (i.e., CM-points of discriminant $-4$) on $X(D,N)$ are not
		branch points.
		
		Furthermore, observe that if $d_1$ is an odd discriminant, then by
		Lemma \ref{lemma: |CM| 1}, the set $\CM(4d_1;X(D,N))$ is
		empty. Therefore, every CM-point of discriminant $4d_1$ on $X'(D,N)$
		must lie in the preimage of some CM-point
		of discriminant $d_1$ on $X(D,N)$. In other words, we have
		$|\CM(4d_1;X'(D,N))|=3|\CM(d_1;X(D,N))|$, except when $d_1=-3$, in
		which case we have $|\CM(-12;X'(D,N))|=|\CM(-3;X(D,N))|$ instead. 
		Likewise, if $d_1\equiv 0\mod 4$, then every CM-point of
		discriminant $d_1$ on $X'(D,N)$ lies in the preimage of some
		CM-points of the same discriminant on $X(D,N)$. Therefore, we have
		$|\CM(d_1;X'(D,N))|=3|\CM(d_1;X(D,N))|$. This completes the proof.
	\end{proof}
	
	\section{Proof of Theorem \ref{theorem: JL}}
	To prove Theorem \ref{theorem: JL}, we will compare the trace formulas on both sides of the identity. The key fomulas are list as Propoistion \ref{proposition: trace formula} and Lemma \ref{lemma: convolution} below.
	Throughout the section, we let $D$ and $N$ be given
	in the statement of Theorem \ref{theorem: JL}. For a positive integer $n$ relatively prime to $DN$, we let
	$$
	M(n):=\{\gamma\in\O'(D,N)):\nrd(\gamma)=n\}.
	$$
	
	By Lemma \ref{lemma: Gamma'(D,N)}(4), $M(n)$ is nonempty.
	
	The trace formulas for modular forms in the  setting of Shimura curves in literature  are all about modular forms on Eichler orders. Since $\O'(D,N)$ is not an Eichler order,  here we briefly sketch the proof of the proposition (although the proof is very similar to the case of Eichler orders).
	
	\begin{Proposition}[Hecke trace formula for $\Gamma'(D,N)$)] \label{proposition: trace formula}
		We have
		\begin{equation} \label{eq: preliminary formula}
			\begin{split}
				\tr(T_n\big| S_k(\Gamma'(D,N)))
				&=\frac{k-1}{4}\alpha_nn^{k/2-1}\phi(D)\psi(N) \\
				&-\frac12\sum_{\substack{t\in\Z\\t^2<4n}}
				\frac{\rho_{t,n}^{k-1}-\overline\rho_{t,n}^{k-1}}
				{\rho_{t,n}-\overline\rho_{t,n}}
				\sum_{r^2d=t^2-4n}\frac1{w_d}|\CM(d;X'(D,N))| \\
				&+\beta_k\sum_{t|n}t,
			\end{split}
		\end{equation}
		where $\phi$ is the Euler totient function,
		\begin{equation} \label{eq: psi}
			\psi(N)=N\prod_{p|N}\left(1+\frac1p\right),
		\end{equation}
		\begin{equation} \label{eq: wd}
			\alpha_n=\begin{cases}
				1, &\text{if }n\text{ is a square}, \\
				0, &\text{else}, \end{cases} \qquad
			w_d=\begin{cases}
				2, &\text{if }d=-4, \\
				3, &\text{if }d=-3, \\
				1, &\text{else}, \end{cases} \qquad
		\end{equation}
		\begin{equation} \label{eq: beta}
			\beta_k=\begin{cases}
				1, &\text{if }k=2, \\
				0, &\text{else},
			\end{cases}
		\end{equation}
		and $\rho_{t,n}=(t+\sqrt{t^2-4n})/2$ denotes the root of the
		polynomial $x^2-tx+n$ with a positive imaginary part.
	\end{Proposition}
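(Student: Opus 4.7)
The plan is to follow the classical derivation of the Eichler--Selberg--Shimizu trace formula for a cocompact arithmetic Fuchsian group, substituting the data of the non-Eichler order $\O'(D,N)$ for the Eichler data at each step. Since $X'(D,N)$ is compact, there is no parabolic contribution, and the right-hand side should split into a central (identity), elliptic, and weight-two piece, exactly the three terms appearing in \eqref{eq: preliminary formula}.

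First I would realize $\tr(T_n\mid S_k(\Gamma'(D,N)))$ as a sum over $\Gamma'(D,N)$-conjugacy classes in $M(n)$ by pairing the automorphic kernel representing $T_n$ against the reproducing kernel of $S_k$ and unfolding in the standard way. This yields
$$\tr(T_n\mid S_k(\Gamma'(D,N)))=n^{k/2-1}\sum_{[\gamma]}I(\gamma),$$
where $[\gamma]$ runs over $\Gamma'(D,N)$-conjugacy classes in $M(n)$ and $I(\gamma)$ is the orbital integral over $\Gamma'_\gamma\backslash\H$, with $\Gamma'_\gamma$ the centralizer of $\gamma$ in $\Gamma'(D,N)$.

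I would then split the sum by the trace $t=\trd(\gamma)$. If $\gamma$ is central, then $\gamma=\pm\sqrt n$ (forced by the division property of $\B_D$), which occurs only when $n$ is a square; the orbital integral equals $\frac{k-1}{4\pi}\Vol(\Gamma'(D,N)\backslash\H)$, and combining Lemma \ref{lemma: Gamma'(D,N)}(2) with the standard formula $\Vol(\Gamma(D,N)\backslash\H)=\frac{\pi}{3}\phi(D)\psi(N)$ gives $\Vol(\Gamma'(D,N)\backslash\H)=\pi\phi(D)\psi(N)$, producing the first term on the right of \eqref{eq: preliminary formula}. If $\gamma$ is elliptic ($t^2<4n$), then $\Gamma'_\gamma$ is the norm-one group of $\O'(D,N)\cap\Q(\gamma)$, and a residue computation at the fixed point of $\iota(\gamma)$, combined with the pairing of $\gamma$ with $\overline\gamma=t-\gamma$ (same fixed point, conjugate multipliers), produces the factor $-\frac12\cdot\frac{\rho_{t,n}^{k-1}-\overline\rho_{t,n}^{\,k-1}}{\rho_{t,n}-\overline\rho_{t,n}}$. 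Grouping the remaining counts by $(t,d,r)$ with $t^2-4n=r^2d$ and applying Lemma \ref{lemma: |CM| 0} to $\O'(D,N)$ identifies the class-count with $|\CM(d;X'(D,N))|/w_d$.

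The residual term $\beta_k\sum_{t\mid n}t$ is a weight-two correction arising from the regularization of the Poincar\'e kernel when $k=2$, and can be cross-checked on small cases such as $(D,N,n,k)=(6,1,1,2)$, where the formula must evaluate to the genus $g(X'(6,1))=0$. The main obstacle will be the elliptic count: one must verify, exactly as in the Eichler case, that $\Gamma'(D,N)$-conjugacy classes of elliptic elements of $M(n)$ with given $(t,d,r)$ biject (weighted by $w_d^{-1}$) with $\Gamma'(D,N)$-orbits of positive optimal embeddings of the quadratic order of discriminant $d$ into $\O'(D,N)$, and that Lemma \ref{lemma: |CM| 0} applies verbatim to the non-Eichler order $\O'(D,N)$. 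Both checks rely on the description of $\Gamma'(D,N)$ as the kernel of reduction modulo the maximal ideal $P$ of $\O(D,N)\otimes\Z_2$ from Lemma \ref{lemma: Gamma'(D,N)}(2), together with the surjectivity of the reduced norm and the double-coset result in Lemma \ref{lemma: Gamma'(D,N)}(4)--(5), which together play the role of strong approximation in the present non-Eichler setting.
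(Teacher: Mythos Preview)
Your proposal is correct and follows essentially the same route as the paper: the paper adapts Zagier's kernel computation to the cocompact group $\Gamma'(D,N)$, splits the sum over $M(n)$ by $t=\trd(\gamma)$, evaluates the central term via $\Vol(\Gamma'(D,N)\backslash\H)=3\cdot\frac{\pi}{3}\phi(D)\psi(N)$, computes the elliptic orbital integrals by pairing each positive conjugacy class $C$ with $\overline C$, and then identifies the set of positive conjugacy classes of given $(t,n)$ with $\bigcup_{r^2d=t^2-4n}\CM(d;X'(D,N))$ directly (noting $w_C=w_d$ since $\CM(-3;X'(D,N))=\varnothing$). One small overreach in your outline: neither Lemma~\ref{lemma: |CM| 0} nor Lemma~\ref{lemma: Gamma'(D,N)}(5) is actually invoked in the paper's derivation of the trace formula itself---the bijection with CM-points is established globally, and the local product formula and the double-coset lemma only enter later in the comparison with the classical side and in the self-adjointness argument, respectively.
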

	
	\begin{proof}   Here we adopt the approach of Zagier \cite{Zagier}. Fix an embedding
		$\iota:\B_D\to M(2,\R)$.
		Then Theorem 1 of \cite{Zagier}, adapted to our setting, states that
		$$
		\tr(T_n\big|S_k(\Gamma'(D,N)))
		=A_kn^{k-1}\sum_{\gamma\in M(n)}I_\gamma,
		$$
		where
		$$
		A_k=\frac{(-1)^{k/2}2^{k-3}(k-1)}\pi,
		$$
		and
		$$
		I_\gamma:=\iint_{F}
		\sum_{\gamma\in M(n)}\frac{y^k}{(c|\tau|^2+d\overline\tau
			-a\tau-b)^k}\frac{dx\,dy}{y^2}.
		$$
		Here $a,b,c,d$ are the entries in $\iota(\gamma)=\SM abcd$,
		$\tau=x+iy$, and the integral is over a fundamental domain $F$ of
		$\iota(\Gamma'(D,N))$ in $\H$.
		
		Partition the sum according to the trace $t$ of $\gamma$ and
		write
		$$
		\tr(T_n\big|S_k(\Gamma'(D,N)))
		=A_kn^{k-1}\sum_{t\in\Z}I(t), \qquad
		I(t):=\sum_{\substack{\gamma\in M(n),\trd(\gamma)=t}}I_\gamma.
		$$
		Consider the cases $t^2-4n=0$, $t^2-4n<0$, and $t^2-4n>0$
		separately, which correspond to the actions on $\H$ given by  parabolic, elliptic, and hyperbolic elements of $\SL(2,\R)$, respectively, if $\gamma \neq \pm I$. 
		
		The case $t^2-4n=0$ occurs only when $n$ is a square.
		In such a case, each of
		$I(\pm 2\sqrt n)$ consisting of one single term
		$$
		I_{\pm\sqrt n}=\iint_F\frac{y^k}{(2i\sqrt ny)^k}\frac{dx\,dy}{y^2}
		=\frac{(-1)^{k/2}}{2^kn^{k/2}}\iint_{F}\frac{dx\,dy}{y^2}.
		$$
		Then by Lemma \ref{lemma: CM on X'},
		$$
		I_{\pm\sqrt n}
		=\frac{3(-1)^{k/2}}{2^kn^{k/2}}\iint_{\iota(\Gamma(D,N))
			\backslash\H}\frac{dx\,dy}{y^2}.
		$$
		According to \cite[Theorem 39.1.13]{Voight-book}, the last integral
		is equal to $\frac\pi3\phi(D)\psi(N)$. Thus, the total contribution
		from the case $t^2-4n=0$ to the trace is
		\begin{equation} \label{eq: trace term 1}
			\begin{split}
				\begin{cases}
					0, &\text{if }n\text{ is not a square}, \\
					\frac{k-1}4n^{k/2-1}\phi(D)\psi(N),
					&\text{if }n\text{ is a square}. \end{cases}
			\end{split}
		\end{equation}
		
		For the case $t^2-4n<0$, we shall show that
		\begin{equation} \label{eq: trace term 2}
			\begin{split}
				A_kn^{k-1}I(t)=-\frac12
				\frac{\rho_{t,n}^{k-1}-\overline\rho_{t,n}^{k-1}}
				{\rho_{t,n}-\overline\rho_{t,n}}
				\sum_{r^2d=t^2-4n}\frac1{w_d}|\CM(d;X'(D,N))|.
			\end{split}
		\end{equation}
		Let $\Gamma'(D,N)$ act on $M(n)$ by conjugation. For
		$\gamma\in M(n)$, we let $\Gamma_\gamma$ denote the isotropy
		subgroup for $\gamma$. Also, given a conjugacy class $C$, we let
		$w_C=|\Gamma_\gamma/\pm1|$, where $\gamma$ is any element in $C$.
		We partition the sum $I(t)$
		according to conjugacy classes and write
		$$
		I(t)=\sum_C\sum_{\gamma\in C}I_\gamma,
		$$
		where the outer sum runs through all conjugacy classes $C$ contained
		in the set $\{\gamma\in M(n):\trd(\gamma)=t\}$. We can check that
		the $(2,1)$-entries of $\iota(\gamma)$ are either all positive or
		all negative for $\gamma\in C$.
		For convenience, we write $C>0$ (respectively, $C<0$) if the
		$(2,1)$-entries are positive (respectively, negative). Now
		following the computation in \cite{Zagier}, we can show that if
		$C>0$, then
		\begin{equation*}
			\begin{split}
				\sum_{\gamma\in C}I_\gamma
				&=\frac1{w_C}\iint_\H
				\frac{y^k}{(|\tau|^2-ity-(t^2/4-n))^k}\frac{dx\,dy}{y^2}\\
				&=\frac{(-1)^{k/2}\pi}{2^{k-2}(k-1)n^{k-1}w_C}
				\frac{\overline\rho_{t,n}^{k-1}}{\rho_{t,n}-\overline\rho_{t,n}},
			\end{split}
		\end{equation*}
		and if $C<0$, then
		\begin{equation*}
			\begin{split}
				\sum_{\gamma\in C}I_\gamma
				&=\frac1{w_C}\iint_\H
				\frac{y^k}{(|\tau|^2+ity-(t^2/4-n))^k}\frac{dx\,dy}{y^2}\\
				&=\frac{(-1)^{k/2}\pi}{2^{k-2}(k-1)n^{k-1}w_C}
				\frac{\overline\rho_{-t,n}^{k-1}}
				{\rho_{-t,n}-\overline\rho_{-t,n}}
				=\frac{(-1)^{k/2}\pi}{2^{k-2}(k-1)n^{k-1}w_C}
				\frac{(-\rho_{t,n})^{k-1}}
				{-\overline\rho_{t,n}+\rho_{t,n}}.
			\end{split}
		\end{equation*}
		Observe that if $C$ is a conjugacy class whose elements have trace
		$t$, then $\overline C:=\{\overline\gamma:\gamma\in C\}$ is also
		such a conjugacy class, where $\overline\gamma$ is the quaternionic
		conjugate of $\gamma$. Moreover, if $C>0$, then $\overline C<0$.
		Thus,
		\begin{equation} \label{eq: I(t) tmp}
			I(t)=\sum_{C>0}\left(\sum_{\gamma\in C}I_\gamma
			+\sum_{\gamma\in\overline C}I_\gamma\right)
			=-\frac{(-1)^{k/2}\pi}{2^{k-2}(k-1)n^{k-1}}
			\frac{\rho_{t,n}^{k-1}-\overline\rho_{t,n}^{k-1}}
			{\rho_{t,n}-\overline\rho_{t,n}}\sum_{C>0}\frac1{w_C}.
		\end{equation}
		Now each conjugacy class $C$ defines an equivalence class of
		embeddings $\sigma$ of $K:=\Q(\sqrt{t^2-4n})$ into $\B_D$ defined by
		$\sigma:r+s\rho_{t,n}\mapsto r+s\gamma$, where $\gamma$ is an
		element in $C$. The common fixed point of $\sigma(K)$ is a CM-point
		of discriminant $d$ on $X'(D,N)$ for some $d$ and $r$ satisfying
		$r^2d=t^2-4n$. Conversely, given a CM-point of discriminant $d$ on
		$X'(D,N)$ such that $r^2d=t^2-4n$ for some integer $r$, there
		corresponds an embedding $\sigma:K\hookrightarrow\B_D$ such that
		$\sigma(K)\cap\O'(D,N)=\sigma(R)$, where $R$ is the quadratic
		order of discriminant $d$ in $K$. Then $\gamma=(t+r\sigma(\sqrt
		d))/2$ is an element in $\O'(D,N)$ of trace $t$ and norm $n$.
		Changing $\gamma$ to $(t-r\sigma(\sqrt d))/2$ if necessary, we may
		assume that the conjugacy class of $\gamma$ is positive.
		Therefore, the set of positive conjugacy classes of trace $t$ and
		norm $n$ is in one-to-one correspondence with the set
		$\cup_{r^2d=t^2-4n}\CM(d;X'(D,N))$. Moreover, if $C$ is a conjugacy
		class corresponding to a CM-point of discriminant $-4$, then
		$w_C=2$; otherwise, $w_C=1$. (By Lemma \ref{lemma: CM on X'},
		$\CM(-3;X'(D,N))$ is empty.) Therefore, the sum $\sum_C1/w_C$ in
		\eqref{eq: I(t) tmp} can be written as
		$$
		\sum_{r^2d=t^2-4n}\frac1{w_d}|\CM(d;X'(D,N))|.
		$$
		Plugging this into \eqref{eq: I(t) tmp}, we obtain \eqref{eq: trace
			term 2}.
		
		Finally, the proof in \cite{Zagier} shows that the contribution of
		the terms with $t^2-4n>0$ is $0$. (We remark in \cite{Zagier} the
		case $t^2-4n=u^2$ for some $u\in\mathbb N$ needs to be considered
		separately. Here since $\B_D$ is a division algebra,
		$\trd(\gamma)^2-4\nrd(\gamma)$ cannot be a square for any
		$\gamma\in\B_D^\times$.) This completes the proof of
		\eqref{eq: preliminary formula}.
	\end{proof}
	
	\begin{Lemma}[{\cite[Theorem 12.4.11]{Cohen-Stromberg}}]
		\label{lemma: trace Gamma0(M)}
		Let $M$ be a positive integer. Then for a positive integer $n$
		relatively prime to $M$ and a positive even integer $k$, we have
		\begin{equation*}
			\begin{split}
				\tr(T_n\big|S_k(\Gamma_0(M)))
				&=\frac{k-1}{12}\psi(M)\alpha_n \\
				&\qquad-\frac12\sum_{\substack{t\in\Z\\t^2<4n}}
				\frac{\rho_{t,n}^{k-1}-\overline\rho_{t,n}^{k-1}}
				{\rho_{t,n}-\overline\rho_{t,n}}
				\sum_{r^2d=t^2-4n}\frac1{w_d}|\CM(d;X_0(M))| \\
				&\qquad-{\sum_{\substack{d|n\\d\le\sqrt n}}}'d^{k-1}
				\sum_{\substack{c|M\\(c,M/c)|(M,n/d-d)}}\phi((c,M/c))
				+\beta_k\sum_{t|n}t,
			\end{split}
		\end{equation*}
		where $\alpha_n$ and $w_d$ are defined in \eqref{eq: wd},
		$\beta_k$ is defined by \eqref{eq: beta},
		$\rho_{t,n}$ denotes a root of the polynomial $x^2-tx+n$, and
		$\displaystyle{\sum}'$ means that the term $d=n^{1/2}$, if present,
		is counted with coefficient $1/2$.
	\end{Lemma}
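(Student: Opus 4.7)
The plan is to transcribe the Zagier-style Eichler--Selberg argument of Proposition \ref{proposition: trace formula}, replacing the non-Eichler order $\O'(D,N)\subset\B_D$ by the Eichler order $M(2,\Z)\cap\Gamma_0(M)$ inside $M(2,\Q)$. After setting up the standard integral representation
\[
\tr(T_n\big|S_k(\Gamma_0(M)))=A_kn^{k-1}\sum_\gamma I_\gamma,\qquad
A_k=\frac{(-1)^{k/2}2^{k-3}(k-1)}\pi,
\]
with the sum running over integer matrices of determinant $n$ lying in the appropriate Hecke double coset, one partitions according to the trace $t$ and splits off the parabolic/identity $(t^2=4n)$, elliptic $(t^2<4n)$, and hyperbolic $(t^2>4n)$ contributions.

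The parabolic case $t^2=4n$ is nonempty only when $n$ is a square, and contributes $\frac{k-1}{12}\psi(M)\alpha_n$ via the area formula $\mathrm{vol}(\Gamma_0(M)\backslash\H)=\frac\pi3\psi(M)$, in exact parallel with the $\frac{k-1}4\alpha_n n^{k/2-1}\phi(D)\psi(N)$ term of Proposition \ref{proposition: trace formula}. The elliptic case $t^2<4n$ transfers essentially verbatim: positive conjugacy classes of trace $t$ and determinant $n$ correspond bijectively to CM-points of discriminant $d$ on $X_0(M)$ with $r^2d=t^2-4n$, yielding
\[
-\frac12\frac{\rho_{t,n}^{k-1}-\overline\rho_{t,n}^{k-1}}{\rho_{t,n}-\overline\rho_{t,n}}\sum_{r^2d=t^2-4n}\frac1{w_d}|\CM(d;X_0(M))|.
\]

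The genuinely new feature is the hyperbolic case. In the division-algebra setting of Proposition \ref{proposition: trace formula} this case vanished because $\trd(\gamma)^2-4\nrd(\gamma)$ cannot be a perfect square over $\Q$ for $\gamma\in\B_D^\times$. Over $M(2,\Q)$, however, when $t^2-4n=u^2$ the matrix $\gamma$ is $\GL(2,\Q)$-conjugate to $\mathrm{diag}(d,n/d)$ with $d=(t+u)/2$. Enumerating $\Gamma_0(M)$-conjugacy classes of such matrices via a local Bruhat analysis at each prime $p\mid M$ produces the arithmetic weight $\sum_{c\mid M,\,(c,M/c)\mid(M,n/d-d)}\phi((c,M/c))$. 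The individually divergent integrals $I_\gamma$ must be symmetrized under the swap $d\leftrightarrow n/d$, after which they collapse to $-d^{k-1}$, with the self-paired case $d=\sqrt{n}$ (possible only when $n$ is a square) picking up the $\tfrac12$ weight recorded by $\sum'$. The last term $\beta_k\sum_{t\mid n}t$ arises from the Eisenstein/constant contribution, invisible on the compact Shimura curve $X'(D,N)$.

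The main obstacle is the hyperbolic bookkeeping: one must simultaneously justify the principal-value symmetrization that collapses pairs of divergent single-orbit integrals into the clean $-d^{k-1}$, correctly detect the self-paired boundary case $d=\sqrt{n}$, and carry out the local orbit count at each prime $p^r\|M$ producing the arithmetic factor $\phi((c,M/c))$ together with the divisibility condition $(c,M/c)\mid(M,n/d-d)$. Everything else is a direct transcription of the Shimura-curve argument carried out in Proposition \ref{proposition: trace formula}.
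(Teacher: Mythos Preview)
The paper does not prove this lemma at all: it is stated as a citation of \cite[Theorem 12.4.11]{Cohen-Stromberg}, with only a remark afterward that the elliptic term has been rewritten in CM-point language. So there is no ``paper's own proof'' to compare against; you are supplying an argument where the authors chose simply to quote the literature.

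Your outline is the standard Zagier/Eichler--Selberg derivation and is the natural route if one wanted to reprove the result in parallel with Proposition~\ref{proposition: trace formula}. One factual slip: you say the term $\beta_k\sum_{t\mid n}t$ is ``invisible on the compact Shimura curve $X'(D,N)$,'' but it is not---look at \eqref{eq: preliminary formula}, where exactly the same $\beta_k\sum_{t\mid n}t$ appears. That term is a weight-$2$ regularization artifact of the (conditionally convergent) kernel, not a cusp contribution, and it is present in both the compact and non-compact settings. Relatedly, your description of the case $t^2=4n$ as purely ``parabolic/identity'' is fine in the division-algebra setting but needs care over $M(2,\Q)$: there one has genuine unipotent elements in addition to the scalars $\pm\sqrt n\,I$, and their contribution is entangled with the cusp terms and the weight-$2$ correction rather than with the volume term alone. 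These are bookkeeping points rather than structural gaps; the overall plan is sound.
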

	
	\begin{Remark}
		For our purposes, we express the contribution of the case $t^2-4n$  in a different
		form than in \cite{Cohen-Stromberg}, see \cite{HPS-orders, HPS-newforms}, \cite[Section 4.2]{QuaternionBook} and \cite[Section 30.7]{Voight-book} for example.  
	\end{Remark}

	The proof of Theorem \ref{theorem: JL} will use properties of certain
	arithmetic functions, which we recall now. For two arithmetic functions $f$ and $g$ defined on
	$\N$, we let the (multiplicative) convolution $f\ast g$ be defined by
	$$
	(f\ast g)(n)=\sum_{d|n}f(d)g(n/d).
	$$
	Then the function $\be$ defined on $\N$ by
	$$
	\be(n)=\begin{cases}1, &\text{if }n=1, \\
		0, &\text{else}, \end{cases}
	$$
	is the identity element for this binary operation. We let
	$\sigma_0(n)=\sum_{d|n}\ell^0=\sum_{d|n}1$ be the divisor
	function. Note that the Dirichlet series of $\sigma_0(m)$ is
	$\zeta(s)^2$. Thus, if we let $\delta$ be the multiplicative function 
	that takes values
	$$
	\delta(p^e)=\begin{cases}
		-2, &\text{if }e=1, \\
		1, &\text{if }e=2, \\
		0, &\text{if }e\ge 3,\end{cases}
	$$
	at prime powers, then
	\begin{equation} \label{eq: delta}
		\sigma_0\ast\delta=\delta\ast\sigma_0=\be.
	\end{equation}
	Thus, if $f(n)$ and $g(n)$ are related by $f=\sigma_0\ast g$, i.e., if
	$$
	f(n)=\sum_{d|n}\sigma_0(d)g(n/d),
	$$
	then we have conversely, $g=(\delta\ast\sigma_0)\ast g=\delta\ast f$,
	i.e.,
	\begin{equation} \label{eq: inversion}
		g(n)=\sum_{d|n}\delta(d)f(n/d).
	\end{equation}
	In the next lemma we compute some sums involving $\sigma_0$ and
	$\delta$.
	
	\begin{Lemma} \label{lemma: convolution}
		\begin{enumerate}
			\item Let $M$ and $n$ be positive integers such that $(n,M)=1$.
			Then we have
			\begin{equation} \label{eq: tr(Tn) 1}
				\tr(T_n\big|S_k(\Gamma_0(M)))
				=\sum_{d|M}\sigma_0(M/d)\tr(T_n\big|S_k(\Gamma_0(d))^\new)
			\end{equation}
			and
			\begin{equation} \label{eq: tr(Tn) 2}
				\tr(T_n\big|S_k(\Gamma_0(M))^\new)
				=\sum_{d|M}\delta(M/d)\tr(T_n\big|S_k(\Gamma_0(d))).
			\end{equation}
			\item We have
			$$
			\sum_{d|M}\delta(d)=\mu(M)=\begin{cases}
				(-1)^r, &\text{if }M\text{ is a product of }r
				\text{ distinct prime}, \\
				0, &\text{else}. \end{cases}
			$$
			\item
			Let $\psi$ be the function defined by \eqref{eq: psi}.
			Then
			\begin{equation} \label{eq: psi convolution}
				\sum_{m|D}\delta(D/m)\psi(mN)=\phi(D)\psi(N).
			\end{equation}
			Also, for a negative discriminant $d$, we let
			\begin{equation} \label{eq: epsilon d}
				e_p(d)=e(d;\O(D,N)_p)
				=\begin{cases}
					1-\left\{\frac dp\right\}, &\text{if }p|D, \\
					e(d;\O(1,N)_p), &\text{if }p|N, \end{cases}
			\end{equation}
			where $e(d;\O_p)$ is defined as in Lemma \ref{lemma: |CM| 0}.
			Then we have
			\begin{equation} \label{eq: CM convolution}
				\sum_{m|D}\delta(D/m)|\CM(d;X_0(mN)|
				=h(d)\prod_{p|DN}e_p(d) =|\CM(d;X(D,N))|.
			\end{equation}
			\item We have
			\begin{equation} \label{eq: psi convolution 2}
				\sum_{m|2D}\delta(2D/m)\psi(mN)=\phi(D)\psi(N).
			\end{equation}
			Moreover, for a negative discriminant $d$, we write $d$ as
			$d=f^2d_0$, where $d_0$ is a fundamental discriminant and $f$ is a
			positive integer. We have
			\begin{equation} \label{eq: CM convolution 2}
				\sum_{m|2D}\delta(2D/m)|\CM(d;X_0(mN))|=h(d)
				\widetilde e_2(p)\prod_{p|DN,p\neq2}e_p(d),
			\end{equation}
			where
			\begin{equation} \label{eq: e'}
				\widetilde e_2(d)=\begin{cases}
					1, &\text{if }d_0\equiv 0\mod 4\text{ and }2\nmid f, \\  
					0, &\text{if }d_0\equiv 0\mod 4\text{ and }2|f, \\
					\JS{d_0}2, &\text{if }d_0\equiv 1\mod 4
					\text{ and }2\nmid f, \\
					-\JS{d_0}2, &\text{if }d_0\equiv 1\mod 4\text{ and }2\|f,\\
					0, &\text{if }d_0\equiv 1\mod 4\text{ and }4|f. \end{cases}
			\end{equation}
			and $e_p(d)$ are defined by \eqref{eq: epsilon d}.
		\end{enumerate}
	\end{Lemma}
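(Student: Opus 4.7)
Part (2) is immediate from multiplicativity: since $\delta$ is multiplicative, so is $M\mapsto\sum_{d|M}\delta(d)$, and on a prime power $p^e$ the sum equals $1$, $1-2=-1$, or $1-2+1=0$ for $e=0,1,\ge 2$ respectively, matching $\mu(p^e)$. For Part (1), the Atkin--Lehner newform decomposition expresses $S_k(\Gamma_0(M))$ as the direct sum, over $d|M$ and $e|M/d$, of the images of the shift maps $g(\tau)\mapsto g(e\tau)$ on $S_k(\Gamma_0(d))^{\new}$. Since $(n,M)=1$, the operator $T_n$ preserves each summand and acts as the $T_n$ of the relevant newform space, so taking traces yields \eqref{eq: tr(Tn) 1} with multiplicity $\sigma_0(M/d)$. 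The inverted identity \eqref{eq: tr(Tn) 2} is then M\"obius inversion via \eqref{eq: delta}.

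\textbf{Part (3).} The plan is to factor the convolution over the primes dividing $D$. Since $(D,N)=1$ and $D$ is squarefree, for every $m|D$ we have $\psi(mN)=\psi(m)\psi(N)$, and Lemma \ref{lemma: |CM| 0} writes $|\CM(d;X_0(mN))|=h(d)\prod_p e(d;\O(1,mN)_p)$ as a product of local factors. The sum $\sum_{m|D}\delta(D/m)(\cdots)$ therefore factors as a product over primes $p|D$. At each $p|D$, using $e(d;\O(1,N)_p)=1$ (as $p\nmid N$), the local contribution is $\delta(p)+\psi(p)=-2+(p+1)=p-1$ in the $\psi$ case, and $\delta(p)+\bigl(1+\{d/p\}\bigr)=-(1-\{d/p\})=-e_p(d)$ in the CM case (via Lemma \ref{lemma: |CM| 1}). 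Multiplying over $p|D$, the overall sign $(-1)^{\omega(D)}$ in the CM product equals $+1$ because $\omega(D)$ is even by hypothesis. This gives \eqref{eq: psi convolution}, and \eqref{eq: CM convolution} follows after identifying the right-hand side with $|\CM(d;X(D,N))|$ via Lemma \ref{lemma: |CM| 0}.

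\textbf{Part (4).} The same template applies to $2D=4\cdot(D/2)$, but the prime $2$ (with exponent $2$) must be treated separately from the odd primes dividing $D/2$. In \eqref{eq: psi convolution 2}, the $2$-local factor is $\delta(4)\psi(1)+\delta(2)\psi(2)+\delta(1)\psi(4)=1-6+6=1$, while the odd primes contribute $\prod_{p|D/2}(p-1)=\phi(D/2)=\phi(D)$ since $\phi(2)=1$. For \eqref{eq: CM convolution 2}, the odd-prime factor picks up a sign $(-1)^{\omega(D/2)}=-1$ (because $\omega(D)$ is even), and the $2$-local factor is
\[
\delta(4)\cdot 1+\delta(2)\bigl(1+\{d/2\}\bigr)+\delta(1)\,e(d;\O(1,4N)_2)=-1-2\{d/2\}+e(d;\O(1,4N)_2),
\]
so the net contribution at $p=2$ is $1+2\{d/2\}-e(d;\O(1,4N)_2)$. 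The main obstacle is verifying that this quantity equals $\widetilde e_2(d)$ as defined in \eqref{eq: e'}; this is a direct case-by-case check through the five cases of Lemma \ref{lemma: |CM| 2}, using $\{d/2\}=\JS{d_0}2$ when $2\nmid f$ (with $\JS{d_0}2=0$ whenever $4|d_0$) and $\{d/2\}=1$ when $2|f$. Each of the five cases matches on the nose, completing the proof.
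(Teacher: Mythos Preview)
Your proof is correct and follows essentially the same route as the paper's. Both arguments exploit the multiplicativity of $\delta$, $\psi$, and the local embedding numbers to factor the convolutions $\sum_{m|D}\delta(D/m)(\cdots)$ and $\sum_{m|2D}\delta(2D/m)(\cdots)$ over primes; the local factor at each odd $p\mid D$ is computed as $p-1$ (for $\psi$) or $\{d/p\}-1$ (for the CM count), the parity of $\omega(D)$ is used to control the global sign, and in Part~(4) the $2$-local factor is evaluated case by case via Lemmas~\ref{lemma: |CM| 1} and~\ref{lemma: |CM| 2}, exactly as you outline.
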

	
	\begin{proof}
		We recall that the space $S_k(\Gamma_0(M))$ has an orthogonal
		decomposition
		$$
		S_k(\Gamma_0(M))=\bigoplus_{d|M}\gen{g(m\tau):
			g(\tau)\in S_k(\Gamma_0(d))^\new,m|(M/d)}
		$$
		in which every direct summand is invariant under all Hecke operators
		$T_n$, $(n,M)=1$. This implies \eqref{eq: tr(Tn) 1}. Then \eqref{eq:
			tr(Tn) 2} follows from \eqref{eq: tr(Tn) 1} and \eqref{eq:
			inversion}. This proves Part (1). (Note that the case $n=1$ yields
		relations between dimensions. The relations are given as Corollary
		13.3.7 in \cite{Cohen-Stromberg}.)
		
		The proof of Part (2) is easy. We let $\bone$ be the function such
		that $\bone(d)=1$ for all $d\in\N$. Then $\sigma_0=\bone\ast\bone$.
		Since the inverse of $\bone$ for the convolution is $\mu$,
		we have $\delta=\mu\ast\mu$ and hence
		$\sum_{d|M}\delta(d)=(\delta\ast\bone)(M)=\mu(M)$.
		
		We next prove \eqref{eq: psi convolution}. Since $(D,N)=1$, we have
		$$
		\sum_{m|D}\delta(D/m)\psi(mN)
		=\psi(N)(\delta\ast\psi)(D)
		=\psi(N)\prod_{p|D}(\delta\ast\psi)(p).
		$$
		Now $(\delta\ast\psi)(p)=\delta(p)+\psi(p)=-2+(p+1)=p-1$. It follows
		that $(\delta_D\ast\psi)(DN)=\phi(D)\psi(N)$. This proves
		\eqref{eq: psi convolution}.
		
		We now prove \eqref{eq: CM convolution}. According to Lemmas
		\ref{lemma: |CM| 0} and \ref{lemma: |CM| 1}, we have
		\begin{equation*}
			\begin{split}
				|\CM(d;X_0(mN))|&=h(d)\prod_pe(d;\O(1,mN)_p) \\
				&=h(d)\prod_{p|m}\left(1+\left\{\frac dp\right\}\right)
				\prod_{p|N}e(d;\O(1,mN)_p)
			\end{split}
		\end{equation*}
		and
		\begin{equation*}
			\begin{split}
				|\CM(d;X(D,N))|&=h(d)\prod_pe(d;\O(D,N)_p) \\
				&=h(d)\prod_{p|D}\left(1-\left\{\frac dp\right\}\right)
				\prod_{p|N}e(d;\O(D,N)_p),
			\end{split}
		\end{equation*}
		where $e(d;\O_p)=|\Emb(d;\O_p)/\O_p^\times|$ is defined as in Lemma
		\ref{lemma: |CM| 0}. Note that when $p|N$, $\O(1,mN)_p\simeq
		\O(D,N)_p\simeq\O(1,N)_p\simeq\O(1,p^r)_p$, where $p^r$ is the exact
		power of $p$ dividing $N$. Thus, if we define $g$ to be the
		multiplicative function that has value $g(p^r)=e(d;\O(1,p^r)_p)$ at
		prime powers, then the claimed identity \eqref{eq: CM convolution}
		is equivalent to
		\begin{equation} \label{eq: CM tmp}
			(\delta\ast g)(D)=\prod_{p|D}
			\left(1-\left\{\frac dp\right\}\right).
		\end{equation}
		Now for $p|D$, we have $(\delta\ast g)(p)=\delta(p)+g(p)
		=-1+\left\{\frac dp\right\}$. Since $D$ has an even number of prime
		divisors, we see that \eqref{eq: CM tmp} holds. This proves
		\eqref{eq: CM convolution}.
		
		The proof of Part (4) is similar to that of Part (3). We have
		\begin{equation*}
			\begin{split}
				\sum_{m|2D}\delta(2D/m)\psi(mN)
				&=\psi(N)(\delta\ast\psi)(2D)
				=\psi(N)(\delta\ast\psi)(4)\prod_{p|D,p\text{ odd}}
				(\delta\ast\psi)(p).
			\end{split}
		\end{equation*}
		We compute that
		$(\delta\ast\psi)(4)=\psi(4)+\delta(2)\psi(2)+\delta(4)
		=6-2\cdot3+1=1$ and $(\delta\ast\psi)(p)=p-1$. Thus,
		$\sum_{m|2D}\delta(2D/m)\psi(mN)=\phi(D)\psi(N)$. This proves
		\eqref{eq: psi convolution 2}.
		
		For \eqref{eq: CM convolution 2}, we let $g$ be defined as above.
		Then \eqref{eq: CM convolution 2} is equivalent to
		$$
		(\delta\ast g)(2D)=\prod_{p|D,p\text{ odd}}
		\left(1-\left\{\frac dp\right\}\right)\times
		\begin{cases}
			1, &\text{if }d_0\equiv0\mod4\text{ and }2\nmid f, \\
			0, &\text{if }d_0\equiv0\mod4\text{ and }2|f, \\
			\JS{d_0}2, &\text{if }d_0\equiv1\mod4\text{ and }2\nmid f,\\
			-\JS{d_0}2,&\text{if }d_0\equiv1\mod4\text{ and }2\|f, \\
			0, &\text{if }d_0\equiv1\mod4\text{ and }4|f. \end{cases}
		$$
		For an odd prime $p$, we have $(\delta\ast g)(p)=\left\{\frac
		dp\right\}-1$ as before. We then check case by case using
		Lemmas \ref{lemma: |CM| 1} and \ref{lemma: |CM| 2} that
		$$
		(\delta\ast g)(4)=\begin{cases}
			-1, &\text{if }d_0\equiv0\mod4\text{ and }2\nmid f, \\
			0, &\text{if }d_0\equiv0\mod4\text{ and }2|f, \\
			-\JS{d_0}2, &\text{if }d_0\equiv1\mod4\text{ and }2\nmid f,\\
			\JS{d_0}2,&\text{if }d_0\equiv1\mod4\text{ and }2\|f, \\
			0, &\text{if }d_0\equiv1\mod4\text{ and }4|f. \end{cases}
		$$
		Then \eqref{eq: CM convolution 2} follows.
	\end{proof}
	
	We are now ready to prove Theorem \ref{theorem: JL}.
	
	\begin{proof}[Proof of Theorem \ref{theorem: JL}]
		To simplify notations, we will write $S_k(\Gamma_0(M))$ simply as
		$S_k(M)$. By Lemma \ref{lemma: convolution}(1), we have
		$$
		\tr(T_n\big|S_k(M)^\new)
		=\sum_{d|M}\delta(M/d)\tr(T_n\big|S_k(d)).
		$$
		Thus,
		\begin{equation*}
			\begin{split}
				\tr(T_n\big|S_k(DN)^{D\text{-new}})
				&=\sum_{d|DN,D|d}\sigma_0(DN/d)
				\tr(T_n\big|S_k(d)^\new) \\
				&=\sum_{d|DN,D|d}\sigma_0(DN/d)\sum_{m|d}\delta(d/m)
				\tr(T_n\big|S_k(m)) \\
				&=\sum_{m|DN}\tr(T_n\big|S_k(m))
				\sum_{\operatorname{lcm}(m,D)|d,d|DN}\sigma_0(DN/d)\delta(d/m).
			\end{split}
		\end{equation*}
		We now write $m$ as $m=m_1m_2$ with $m_1=(m,D)$ and $m_2=(m,N)$.
		Then setting $d=d'D$, the sum can be written as
		\begin{equation*}
			\begin{split}
				\tr(T_n\big|S_k(DN)^{D\text{-new}})
				&=\sum_{m_1|D}\sum_{m_2|N}\tr(T_n\big|S_k(m_1m_2))
				\sum_{m_2|d',d'|N}\sigma_0(N/d')
				\delta(d'D/m_1m_2) \\
				&=\sum_{m_1|D}\delta(D/m_1)
				\sum_{m_2|N}\tr(T_n\big|S_k(m_1m_2)) \\
				&\qquad\qquad\times\sum_{d''|(N/m_2)}
				\sigma_0(N/d''m_2)\delta(d'').
			\end{split}
		\end{equation*}
		Applying \eqref{eq: delta} to the innermost sum, we obtain
		$$
		\sum_{d''|(N/m_2)}\sigma_0(N/d''m_2)\delta(d'')
		=\begin{cases}
			1, &\text{if }m_2=N, \\
			0, &\text{else}. \end{cases}
		$$
		It follows that
		\begin{equation} \label{eq: RHS 1}
			\tr(T_n\big|S_k(DN)^{D\text{-new}})
			=\sum_{m|D}\delta(D/m)\tr(T_n\big|S_k(mN))
		\end{equation}
		Similarly, we have
		\begin{equation} \label{eq: RHS 2}
			\tr(T_n\big|S_k(2DN)^{2D\text{-new}})
			=\sum_{m|2D}\delta(2D/m)\tr(T_n\big|S_k(mN)).
		\end{equation}
		We now write $\tr(T_n\big|S_k(M))$ as
		$$
		\tr(T_n\big|S_k(M))=\frac{k-1}{12}\alpha_nA_1(M)
		-\frac12A_2(M)-A_3(M)+\beta_kA_4(M)\sum_{t|n}t
		$$
		according to Lemma \ref{lemma: trace Gamma0(M)}, where
		$$
		A_1(M)=\psi(M),\qquad
		A_2(M)=\sum_{\substack{t\in\Z\\t^2<4n}}
		\frac{\rho_{t,n}^{k-1}-\overline\rho_{t,n}^{k-1}}
		{\rho_{t,n}-\overline\rho_{t,n}}
		\sum_{r^2d=t^2-4n}\frac1{w_d}|\CM(d;X_0(M))|,
		$$
		$$
		A_3(M)={\sum_{\substack{d|n\\d\le\sqrt n}}}'d^{k-1}
		\sum_{\substack{c|M\\(c,M/c)|(M,n/d-d)}}\phi((c,M/c)),
		\qquad A_4(M)=1.
		$$
		Then
		\begin{equation*}
			\begin{split}
				\tr(T_n\big|S_k(DN)^{D\text{-new}})
				&=\sum_{m|D}\delta(D/m)\Bigg(\frac{k-1}{12}\alpha_nA_1(mN) \\
				&\qquad
				-\frac12A_2(mN)-A_3(mN)+\beta_kA_4(mN)\sum_{t|n}t\Bigg).
			\end{split}
		\end{equation*}
		By Lemma \ref{lemma: convolution}, we have
		$$
		\sum_{m|D}\delta(D/m)A_1(mN)=\phi(D)\psi(N),
		$$
		$$
		\sum_{m|D}\delta(D/m)A_2(mN)=\sum_{\substack{t\in\Z\\t^2<4n}}
		\frac{\rho_{t,n}^{k-1}-\overline\rho_{t,n}^{k-1}}
		{\rho_{t,n}-\overline\rho_{t,n}}
		\sum_{r^2d=t^2-4n}\frac{h(d)}{w_d}\prod_{p|DN}e_p(d),
		$$
		where $e_p(d)$ are defined by \eqref{eq: epsilon d}, and  
		$$
		\sum_{m|D}\delta(D/m)A_4(mN)=\mu(D)=1.
		$$
		For the sum involving $A_3(mN)$, we note that the inner sum in
		$A_3(mN)$ is equal to $\sum_{c|mN}1=\sigma_0(mN)$. Thus, we have
		$$
		\sum_{m|D}\delta(D/m)A_3(mN)
		=C\sum_{m|D}\delta(D/m)\sigma_0(mN)
		=C\sigma_0(N)(\delta\ast\sigma_0)(D),
		$$
		where
		$$
		C={\sum_{\substack{d|n\\d\le\sqrt n}}}'d^{k-1},
		$$
		By \eqref{eq: delta}, $(\delta\ast\sigma_0)(D)=0$. Therefore,
		we have
		$$
		\sum_{m|D}\delta(D/m)A_3(mN)=0.
		$$
		Combining everything, we obtain
		\begin{equation} \label{eq: LHS first}
			\begin{split}
				\tr(T_n\big|S_k(DN)^{D\text{-new}})
				&=\frac{k-1}{12}\alpha_n\phi(D)\psi(N) \\
				&-\frac12\sum_{\substack{t\in\Z\\t^2<4n}}
				\frac{\rho_{t,n}^{k-1}-\overline\rho_{t,n}^{k-1}}
				{\rho_{t,n}-\overline\rho_{t,n}}
				\sum_{r^2d=t^2-4n}\frac{h_d}{w_d}\prod_{p|DN}e_p(d)
				+\beta_k\sum_{t|n}t.
			\end{split}
		\end{equation}
		(Note that this reproves the Jacquet-Langlands correspondence
		between $S_k(\Gamma_0(DN))^{D\text{-new}}$ and $S_k(\Gamma(D,N))$.)
		
		The trace of $T_n\big|S_k(2DN)^{2D\text{-new}}$ is computed in
		the same way. We have
		\begin{equation*}
			\begin{split}
				\tr(T_n\big|S_k(2DN)^{2D\text{-new}})
				&=\sum_{m|2D}\delta(2D/m)\Bigg(\frac{k-1}{12}\alpha_nA_1(mN) \\
				&\qquad
				-\frac12A_2(mN)-A_3(mN)+\beta_kA_4(mN)\sum_{t|n}t\Bigg).
			\end{split}
		\end{equation*}
		Applying Lemma \eqref{lemma: convolution}, we find that
		$$
		\sum_{m|2D}\delta(2D/m)A_1(mN)=\phi(D)\psi(N),
		$$
		and
		$$
		\sum_{m|2D}\delta(2D/m)A_2(mN)=\sum_{\substack{t\in\Z\\t^2<4n}}
		\frac{\rho_{t,n}^{k-1}-\overline\rho_{t,n}^{k-1}}
		{\rho_{t,n}-\overline\rho_{t,n}}
		\sum_{r^2d=t^2-4n}\frac{h(d)}{w_d}\widetilde e_2(d)
		\prod_{p|DN,p\neq2}e_p(d),
		$$
		where $\widetilde e_2(d)$ is defined by \eqref{eq: e'}.  
		The sums involving $A_3(mN)$ and $A_4(mN)$ are a bit different from
		the case of $S_k(DN)^{D\text{-new}}$. Consider the inner sum
		$$
		\sum_{\substack{c|mN\\(c,mN/c)|(mN,n/d-d)}}\phi((c,mN/c))
		$$
		in $A_3(mN)$. Write $c$ as $c=c_1c_2$ with $c_1|2D$ and $c_2|N$.
		Then the inner sum is equal to
		\begin{equation} \label{eq: A3 tmp}
			\sum_{\substack{c_2|N\\(c_2,N/c_2)|(N,n/d-d)}}\phi((c_2,N/c_2))
			\sum_{\substack{c_1|m\\(c_1,m/c_1)|(m,n/d-d)}}
			\phi((c_1,m/c_1)).
		\end{equation}
		Observe that for $c_1|m$, $(c_1,m/c_1)$ is either $1$ or $2$, so
		$\phi((c_1,m/c_1))$ is always $1$. Furthermore, $(c_1,m/c_1)=2$
		occurs only when $4|m$ and $2\|c_1$. Since $n$ is odd, the 
		integer $n/d-d$ is always even. Thus, the condition
		$(c_1,m/c_1)|(m,n/d-d)$ holds for any divisor $c_1$ of $m$.
		Therefore, the sum in \eqref{eq: A3 tmp} is reduced to
		$$
		\sigma_0(m)\sum_{\substack{c_2|N\\(c_2,N/c_2)|(N,n/d-d)}}
		\phi((c_2,N/c_2)).
		$$
		Then since
		$\sum_{m|2D}\delta(2D/m)\sigma_0(m)=(\delta\ast\sigma_0)(2D)=0$,
		we find that
		$$
		\sum_{m|2D}\delta(2D/m)A_3(mN)=0.
		$$
		For the sum involving $A_4(mN)$, we have, by Lemma \ref{lemma:
			convolution}(2),
		$$
		\sum_{m|2D}A_4(mN)=\mu(2D)=0.
		$$
		Altogether, we see that
		\begin{equation*}
			\begin{split}
				\tr(T_n\big|S_k(2DN)^{2D\text{-new}})
				&=\frac{k-1}{12}\alpha_n\phi(D)\psi(N) \\
				&-\frac12\sum_{\substack{t\in\Z\\t^2<4n}}
				\frac{\rho_{t,n}^{k-1}-\overline\rho_{t,n}^{k-1}}
				{\rho_{t,n}-\overline\rho_{t,n}}
				\sum_{r^2d=t^2-4n}\frac{h(d)}{w_d}\widetilde e_2(d)
				\prod_{\substack{p|DN\\p\neq2}}e_p(d).
			\end{split}
		\end{equation*}
		Combining this with \eqref{eq: LHS first}, we obtain
		\begin{equation*}
			\begin{split}
				&\tr(T_n\big|S_k(DN)^{D\text{-new}})
				+2 \tr(T_n\big|S_k(2DN)^{2D\text{-new}}) \\
				&\qquad=\frac{k-1}{4}\alpha_n\phi(D)\psi(N) \\
				&\qquad-\frac12\sum_{\substack{t\in\Z\\t^2<4n}}
				\frac{\rho_{t,n}^{k-1}-\overline\rho_{t,n}^{k-1}}
				{\rho_{t,n}-\overline\rho_{t,n}}
				\sum_{r^2d=t^2-4n}\frac{h(d)}{w_d}(e_2(d)+2\widetilde e_2(d))
				\prod_{\substack{p|DN\\p\neq2}}e_p(d)+\beta_k\sum_{t|n}t.
			\end{split}
		\end{equation*}
		On the other hand, by Proposition \ref{proposition:
			trace formula}, we have
		\begin{equation*}
			\begin{split}
				\tr(T_n\big|\Gamma'(D,N))
				&=\frac{k-1}{4}\alpha_n\phi(D)\psi(N) \\
				&-\frac12\sum_{\substack{t\in\Z\\t^2<4n}}
				\frac{\rho_{t,n}^{k-1}-\overline\rho_{t,n}^{k-1}}
				{\rho_{t,n}-\overline\rho_{t,n}}
				\sum_{\substack{r^2d=t^2-4n\\4|d}}\frac1{w_d}|\CM(d;X'(D,N))|
				+\beta_k\sum_{t|n}t.
			\end{split}
		\end{equation*}
		Comparing the two expressions, we see that to prove the theorem, it
		suffices to show that for all integers $t$ such that $t^2<4n$, one
		has
		\begin{equation} \label{eq: goal}
			\sum_{r^2d=t^2-4n}\frac{h(d)}{w_d}(e_2(d)+2\widetilde e_2(d))
			\prod_{\substack{p|DN\\p\neq2}}e_p(d)
			=\sum_{\substack{r^2d=t^2-4n\\4|d}}\frac1{w_d}|\CM(d;X'(D,N))|.
		\end{equation}
		
		Let us first consider the case $t$ is odd. In this case, the sum in
		the right-hand side of \eqref{eq: goal} is empty. On the other
		hand, since $n$ is odd, the discriminant $d$ in the sum is always
		congruent to $5$ modulo $8$. Consequently, we have, by \eqref{eq:
			e'},
		$$
		e_2(d)+2\widetilde e_2(d)=2+2\times(-1)=0.
		$$
		Thus, the left-hand side of \eqref{eq: goal} is also equal to $0$.
		This proves \eqref{eq: goal} for the case $t$ is odd.
		
		From now on we assume that $t$ is even. Let $d_0$ be the
		discriminant of the field $\Q(\sqrt{t^2-4n})$ and for $d$ such that
		$r^2d=t^2-4n$ for some $r$, we write $d$ as $d=f^2d_0$. Consider the
		case $4|d_0$. According to \eqref{eq: e'},
		$$
		e_2(d)=\widetilde e_2(d)=\begin{cases}
			1, &\text{if }2\nmid f, \\
			0, &\text{if }2|f. \end{cases}
		$$
		Either way, we find that $e_2(d)+2\widetilde e_2(d)=3e_2(d)$ and
		\eqref{eq: goal}. On the other hand, since $4|d_0$, by Lemma
		\ref{lemma: CM on X'} and the definition \eqref{eq: epsilon d} of
		$e_p(d)$,
		$$
		|\CM(d;X'(D,N))|=3|\CM(d;X(D,N))|
		=3h(d)\prod_{p|DN}e_p(d).
		$$
		Thus, \eqref{eq: goal} holds in the case $4|d_0$.
		
		We next consider the case $d_0\equiv 1\mod 8$. In this case,
		we have
		$$
		e_2(d)+2\widetilde e_2(d)=\begin{cases}
			0+2=2, &\text{if }2\nmid f,\\
			0-2=-2, &\text{if }2\|f, \\
			0, &\text{if }4|f. \end{cases}
		$$
		Therefore, the left-hand side of \eqref{eq: goal} is equal to
		$$
		2\sum_{d:2\nmid f}
		h(d)\prod_{p|DN,p\neq2}e_p(d)
		-2\sum_{d:2\|f}
		h(d)\prod_{p|DN,p\neq2}e_p(d).
		$$
		Now recall that if $d$ is a discriminant such that $d\equiv1\mod 8$,
		then $h(4d)=h(d)$. Thus, the two sums above actually cancel out and
		the left-hand side of \eqref{eq: goal} is equal to $0$. On the other
		hand, the right-hand side of \eqref{eq: goal} is also equal to $0$
		due to the fact that an imaginary quadratic number field of
		discriminant congruent to $1$ modulo $8$ cannot be embedded into
		$\B_D$. We conclude that \eqref{eq: goal} holds when
		$d_0\equiv1\mod8$.
		
		We now consider the last case $d_0\equiv5\mod 8$. We have
		$$
		e_2(d)+2\widetilde e_2(d)=\begin{cases}
			2-2=0, &\text{if }2\nmid f, \\
			0+2=2, &\text{if }2\|f, \\
			0, &\text{if }4|f. \end{cases}
		$$
		Thus, the left-hand side of \eqref{eq: goal} is equal to
		$$
		2\sum_{d:2\|f}\frac{h(d)}{w_d}\prod_{p|DN,p\neq2}e_p(d).
		$$
		Recall the fact that if a discriminant $d$ is congruent to $5$
		modulo $8$, then $h(4d)=3h(d)/w_d$. Therefore, the sum above is
		equal to
		$$
		6\sum_{d\equiv 5\mod 8}\frac{h(d)}{w_d}\prod_{p|DN,p\neq2}e_p(d).
		$$
		On the other hand, by Lemma \ref{lemma: CM on X'}, the right-hand
		side of \eqref{eq: goal} is equal to
		\begin{equation*}
			\begin{split}
				\sum_{4\|d}|\CM(d;X'(D,N))|
				&=3\sum_{d\equiv5\mod 8}\frac{1}{w_d}|\CM(d;X(D,N))| \\
				&=6\sum_{d\equiv5\mod 8}\frac{h(d)}{w_d}\prod_{p|DN,p\neq2}
				e_p(d).
			\end{split}
		\end{equation*}
		Therefore, \eqref{eq: goal} holds for the case $d_0\equiv5\mod 8$ as
		well. This completes the proof of the theorem.
	\end{proof}
	
	\section{Proof of Theorem \ref{theorem: JL1}}
	
	In this section, we will prove Theorem \ref{theorem: JL1}.
	To prove the theorem, we first introduce  an isomorphism from $S_k(\Gamma'(D,N),\chi)$ to $S_k(\Gamma'(D,N),\overline\chi)$ that is the analogue of the map $f\to f^c$ in the setting of classical modular forms, where $f^c(\tau):=\overline{f(-\overline\tau)}$. Then we will show that Hecke operators  on $S_k(\Gamma'(D,N))$ are self-adjoint with respect to the Petersson inner product, and hence their eigenvalues are real.

	By Lemma \ref{lemma: Gamma'(D,N)}(4), $\O'(D,N)$ has an element $\sigma$ of reduced norm $-1$. For $f\in S_k(\Gamma'(D,N))$, define $f^c$ by
	$$
	f^c(\tau):=\overline{(f\big|_k\sigma)(\overline\tau)}
	=\frac1{(c\tau+d)^k}\overline{f(\sigma\overline\tau)},
	$$
	where we write $\iota(\sigma)=\SM abcd$.
	It is easy to check that the definition of $f^c$ does not depend on the choice of $\sigma$, as $[\O'(D,N)^\times:\G'(D,N)]=2$. The linear map $f\mapsto f^c$ has the following properties.
	
	\begin{Lemma} \label{lemma: c}
		\begin{enumerate}
			\item We have $(f^c)^c=f$, i.e., $f\mapsto f^c$ is an involution on $S_k(\Gamma'(D,N))$.
			\item For a positive integer $n$ relatively prime to $DN$, we have $T_n\circ c=c\circ T_n$, i.e., the involution $f\mapsto f^c$ commutes with Hecke operators $T_n$.
			\item 
			For a character $\chi$ of $\Gamma(D,N)/\Gamma'(D,N)$, the map $f\mapsto f^c$ is an isomorphism from $S_k(\Gamma'(D,N),\chi)$ to $S_k(\Gamma'(D,N),\overline\chi)$.
		\end{enumerate}
	\end{Lemma}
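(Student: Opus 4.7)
Fix once and for all an element $\sigma\in\O'(D,N)$ of reduced norm $-1$, whose existence is guaranteed by Lemma~\ref{lemma: Gamma'(D,N)}(4). The well-definedness of $f^c$ is immediate: since $\O'(D,N)^\times/\Gamma'(D,N)$ has order $2$ (being the index-$3$ subgroup of $\O(D,N)^\times/\Gamma'(D,N)\cong\Z/6\Z$ from Lemma~\ref{lemma: Gamma'(D,N)}(3)), any two norm-$(-1)$ elements of $\O'(D,N)$ differ by right multiplication by an element of $\Gamma'(D,N)$, under which $f\big|_k\sigma$ is invariant.

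For Part~(1) I would compute $(f^c)^c(\tau)$ directly using the same $\sigma$. Writing $\iota(\sigma)=\SM abcd$, the crucial identity $\overline{\sigma\overline\tau}=(a\tau+b)/(c\tau+d)$ (valid because the entries are real) lets two applications of the defining formula produce $f(\sigma^2\tau)$ multiplied by factors $j(\sigma,\tau)^{-k}$ and $j(\sigma,\sigma\tau)^{-k}$, where $j(\gamma,z):=cz+d$. Since $\sigma^2\in\O'(D,N)$ has reduced norm $1$, it lies in $\Gamma'(D,N)$, so $f(\sigma^2\tau)=j(\sigma^2,\tau)^k f(\tau)$; the cocycle identity $j(\sigma^2,\tau)=j(\sigma,\sigma\tau)\,j(\sigma,\tau)$ then cancels all $j$-factors and yields $(f^c)^c=f$.

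For Part~(2) the key observation is that conjugation $\gamma\mapsto\sigma^{-1}\gamma\sigma$ preserves $\O'(D,N)$ (since by Lemma~\ref{lemma: Gamma'(D,N)}(1), membership is detected by the parity of the reduced trace, which is invariant under inner automorphisms) and preserves the reduced norm, hence permutes $M(n)$ and descends to a bijection of $\Gamma'(D,N)\backslash M(n)$. Re-indexing the Hecke sum defining $(T_nf)^c$ via $\gamma\mapsto\sigma\gamma'\sigma^{-1}$, equivalently $\gamma\sigma=\sigma\gamma'$, and then pulling the slash action by $\sigma$ through the sum converts it into the Hecke sum defining $T_n(f^c)$.

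For Part~(3), fix $\alpha\in\Gamma(D,N)$ and compute
\begin{equation*}
(f^c\big|_k\alpha)(\tau)
=\overline{(f\big|_k(\sigma\alpha))(\overline\tau)}
=\overline{\bigl((f\big|_k\alpha')\big|_k\sigma\bigr)(\overline\tau)}
=\overline{\chi(\alpha')}\,f^c(\tau),
\end{equation*}
where $\alpha':=\sigma\alpha\sigma^{-1}\in\Gamma(D,N)$ and we used $\sigma\alpha=\alpha'\sigma$ together with $f\big|_k\alpha'=\chi(\alpha')f$. To conclude that $f^c\in S_k(\Gamma'(D,N),\overline\chi)$ I need $\chi(\alpha')=\chi(\alpha)$, equivalently $\alpha'\alpha^{-1}\in\Gamma'(D,N)$. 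I would prove this by passing to the $2$-adic picture of the proof of Lemma~\ref{lemma: Gamma'(D,N)}: the image of $\sigma$ in $R/P\cong\F_4$ lies in the subring $(\Z_2+P)/P\cong\F_2$ and, being a unit, equals $1$. Commutativity of $R/P=\F_4$ then forces $\sigma\alpha\sigma^{-1}\equiv\alpha\pmod P$, so $\alpha'\alpha^{-1}$ lies in the kernel $\Gamma'(D,N)$ of reduction modulo $P$. Bijectivity of $f\mapsto f^c$ between the two character spaces is then immediate from Part~(1). This triviality of $\sigma$-conjugation on the three-element quotient $\Gamma(D,N)/\Gamma'(D,N)$ is the main conceptual point of the argument; everything else is a bookkeeping exercise with the slash action and the chain rule for $j$.
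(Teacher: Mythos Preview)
Your proposal is correct and follows essentially the same approach as the paper: Part~(1) via $\sigma^2\in\Gamma'(D,N)$, Part~(2) via $\sigma$ normalizing $M(n)$ and $\Gamma'(D,N)$, and Part~(3) via the fact that $\sigma\alpha\sigma^{-1}$ and $\alpha$ lie in the same coset of $\Gamma'(D,N)$. The only cosmetic difference is in the justification of this last point: the paper simply cites Lemma~\ref{lemma: Gamma'(D,N)}(3), using that the quotient $\O(D,N)^\times/\Gamma'(D,N)$ is cyclic (hence abelian), whereas you unfold the $2$-adic picture and argue that $\sigma\equiv1\pmod P$ so conjugation is trivial in $R/P\cong\F_4$; these are two phrasings of the same fact.
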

	
	\begin{proof}
		Let $\sigma$ be an element of reduced norm $-1$ in $\O'(D,N)$ that defines $f^c$. Part (1) follows from the fact that $\sigma^2\in\Gamma'(D,N)$.
		Also, Part (2) follows from the fact that $\sigma$ normalizes both $M(n)$ and $\Gamma'(D,N)$.
		
		We now prove Part (3). Let $f\in S_k(\Gamma'(D,N),\chi)$.
		For $\gamma\in\Gamma(D,N)$, let $\gamma'=\sigma\gamma\sigma^{-1}$. By Lemma \ref{lemma: Gamma'(D,N)}(3), $\gamma$ and $\gamma'$ are in the same coset of $\Gamma'(D,N)$ in $\Gamma(D,N)$. Therefore,
		$$
		(f^c\big|_k\gamma)(\tau)=\overline{(f\big|_k\sigma\gamma)(\overline\tau)}=\overline{(f\big|_k\gamma'\sigma)(\overline\tau)}=\overline{\chi(\gamma')
			(f\big|_k\sigma)(\overline\tau)}
		=\overline{\chi(\gamma)}f^c(\tau).
		$$
		This shows that the involution $f\mapsto f^c$ maps  $S_k(\Gamma'(D,N),\chi)$ to $S_k(\Gamma'(D,N),\overline\chi)$ and defines an isomorphism between the two spaces.
	\end{proof}
	
	For two modular forms $f$ and $g$ on a subgroup $\Gamma$ of finite index of $\Gamma(D,N)$, we let
	$$
	\gen{f,g}:=
	\frac1{\operatorname{vol}(\Gamma\backslash\H)}
	\int_{\Gamma\backslash\H}
	f(\tau)\overline{g(\tau)}y^k\frac{dx\,dy}{y^2}
	$$
	be the Petersson inner product. We now show that the Hecke operators are Hermitian. 
	\begin{Lemma} \label{lemma: self-adjoint}
		Assume that $n$ is a positive integer relatively prime to $DN$. Then the Hecke operator $T_n$ on $S_k(\Gamma'(D,N))$ is self-adjoint with respect to the Petersson inner product. Consequently, every eigenvalue of $T_n$ is real.
	\end{Lemma}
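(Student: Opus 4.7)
The plan is to realize $T_n$ as a sum of double coset Hecke operators and invoke the standard formula that the adjoint of $[\Gamma'(D,N)\alpha\Gamma'(D,N)]$ is $[\Gamma'(D,N)\bar\alpha\Gamma'(D,N)]$, where $\bar\alpha$ is the quaternionic conjugate. The key new input compared to the Eichler case is that $M(n)$ need not be a single $\Gamma'(D,N)$-double coset, so individual summands need not be self-adjoint; nevertheless, conjugation $\alpha\mapsto\bar\alpha$ merely permutes the collection of double cosets inside $M(n)$, so the total sum is self-adjoint.

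First, I would partition
$$
M(n)=\bigsqcup_{j=1}^r\Gamma'(D,N)\,\alpha_j\,\Gamma'(D,N)
$$
into $\Gamma'(D,N)$-double cosets, with $r$ finite (when $n$ is prime, Lemma \ref{lemma: Gamma'(D,N)}(5) forces $M(n)$ into a single $\Gamma(D,N)$-double coset, so $r\le [\Gamma(D,N):\Gamma'(D,N)]^2=9$; the composite case reduces to this via multiplicativity). Choosing left $\Gamma'(D,N)$-coset representatives within each double coset and concatenating, we obtain a complete set of representatives of $\Gamma'(D,N)\backslash M(n)$, so
$$
T_n=n^{k/2-1}\sum_{j=1}^r\bigl[\Gamma'(D,N)\,\alpha_j\,\Gamma'(D,N)\bigr].
$$

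Next, I would apply the standard adjoint formula for double coset Hecke operators: on $S_k(\Gamma'(D,N))$ with respect to the Petersson inner product, the adjoint of $\bigl[\Gamma'(D,N)\alpha\Gamma'(D,N)\bigr]$ is $\bigl[\Gamma'(D,N)\alpha^\ast\Gamma'(D,N)\bigr]$ with $\alpha^\ast=\nrd(\alpha)\alpha^{-1}=\bar\alpha$. The derivation is the usual change-of-variables computation using the $\GL_2^+(\R)$-invariance of the hyperbolic measure $dx\,dy/y^2$ together with the slash-operator identities; it is purely formal and does not rely on the Eichler property. Consequently,
$$
T_n^\ast=n^{k/2-1}\sum_{j=1}^r\bigl[\Gamma'(D,N)\,\bar\alpha_j\,\Gamma'(D,N)\bigr].
$$

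Finally, quaternionic conjugation $\alpha\mapsto\bar\alpha$ is an involution on $M(n)$ that fixes $\Gamma'(D,N)$ setwise — for $\gamma\in\Gamma'(D,N)$ one has $\bar\gamma=\gamma^{-1}\in\Gamma'(D,N)$ — so $\overline{\Gamma'\alpha\Gamma'}=\Gamma'\bar\alpha\Gamma'$, and conjugation induces a bijection on the finite set of $\Gamma'(D,N)$-double cosets in $M(n)$. Since the partition of $M(n)$ into such double cosets is unique, the multiset $\{\Gamma'\bar\alpha_j\Gamma'\}_{j=1}^r$ equals $\{\Gamma'\alpha_j\Gamma'\}_{j=1}^r$ up to a permutation of $\{1,\ldots,r\}$, whence $T_n^\ast=T_n$ and all eigenvalues of $T_n$ are real. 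The main technical point is the adjoint formula for a single double coset operator in the non-Eichler setting; but since that derivation depends only on the measure invariance and the slash operator, both of which are independent of any Eichler condition, no substantive obstacle arises — the non-Eichler nature of $\O'(D,N)$ is entirely absorbed by the permutation-of-double-cosets step.
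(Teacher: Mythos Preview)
Your argument is correct and takes a genuinely different route from the paper's. You work directly on all of $S_k(\Gamma'(D,N))$, decompose $M(n)$ into $\Gamma'(D,N)$-double cosets, invoke the general adjoint formula $[\Gamma'\alpha\Gamma']^\ast=[\Gamma'\bar\alpha\Gamma']$ (valid for any cofinite Fuchsian group, independent of the Eichler condition), and then observe that $\alpha\mapsto\bar\alpha$ permutes these double cosets inside $M(n)$, yielding $T_n^\ast=T_n$ for every $n$ coprime to $DN$ in one stroke. The paper instead restricts to each eigenspace $S_k(\Gamma'(D,N),\chi)$, reduces to prime $n=p$, and uses Lemma~\ref{lemma: Gamma'(D,N)}(5) (any two elements of reduced norm $p$ in $\O'(D,N)$ are related by $\gamma_1=\alpha\gamma_2\beta$ with $\alpha,\beta\in\Gamma(D,N)$ and $\alpha\beta\in\Gamma'(D,N)$) to show that $\langle f|_k\gamma,g\rangle$ is independent of the choice of $\gamma\in M(p)$ on a fixed $\chi$-eigenspace; self-adjointness then follows from the single identity $\langle f|_k\gamma,g\rangle=\langle f,g|_k\bar\gamma\rangle$. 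Your approach is cleaner, uniform in $n$, and avoids both the reduction to primes and the character decomposition; the paper's approach, by contrast, exploits the specific arithmetic of $\O'(D,N)$ established earlier and makes the role of the index-$3$ inclusion $\Gamma'(D,N)\lhd\Gamma(D,N)$ explicit. Your parenthetical bound $r\le 9$ is correct but not needed: finiteness of the double-coset decomposition is automatic from $|\Gamma'(D,N)\backslash M(n)|<\infty$, and nothing else in your argument depends on it.
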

	
	\begin{proof} 
		Since $S_k(\Gamma'(D,N))=\oplus_\chi S_k(\Gamma'(D,N),\chi)$ and each $S_k(\Gamma'(D,N),\chi)$ is Hecke-invariant, where $\chi$ are characters of $\Gamma(D,N)/\Gamma'(D,N)$, it suffices to prove that Hecke operators are self-adjoint on each $S_k(\Gamma'(D,N),\chi)$. Moreover, since the Hecke algebra is generated by $T_p$ for primes $p$ not dividing $DN$, we only need to prove that $T_n$ is self-adjoint on $S_k(\Gamma'(D,N),\chi)$ for the case $n$ is a prime.
		
		We first prove that if $\gamma_1$ and $\gamma_2$ are two elements of reduced norm $p$ in $\O'(D,N)$, then for $f,g\in S_k(\Gamma'(D,N),\chi)$ we have 
		\begin{equation} \label{eq: inner products equal} \gen{f\big|_k\gamma_1,g}=\gen{f\big|_k\gamma_2,g}.
		\end{equation}
		(Note that $f\big|_k\gamma_j$ is a modular form on some subgroup of finite index of $\Gamma'(D,N)$.)
		Indeed, by Lemma \ref{lemma: Gamma'(D,N)}(5), there are elements $\alpha$ and $\beta$ of $\Gamma(D,N)$ with $\alpha\beta\in\Gamma'(D,N)$ such that $\gamma_1=\alpha\gamma_2\beta$. Then the standard properties of the Petersson inner product imply that
		\begin{equation*}
			\begin{split}
				\gen{f\big|_k\gamma_1,g}
				&=\gen{f\big|_k\alpha\gamma_2\beta,g}    =\chi(\alpha)\gen{f\big|_k\gamma_2\beta,g}
				=\chi(\alpha)\gen{f\big|_k\gamma_2,g\big|_k\beta^{-1}}\\
				&=\chi(\alpha)\chi(\beta)\gen{f\big|_k\gamma_2,g}
				=\gen{f\big|_k\gamma_2,g}.
			\end{split}
		\end{equation*}
		This proves \eqref{eq: inner products equal}. Consequently,
		we have
		$$
		\gen{T_pf,g}=(p+1)\gen{f\big|_k\gamma,g}, \qquad
		\gen{f,T_pg}=(p+1)\gen{f,g\big|_k\gamma}
		$$
		for any element $\gamma$ of reduced norm $p$ in $\O'(D,N)$. Here $p+1=|\G'(D,N)\backslash M(p)|$. 
		Now we have $\gen{f\big|_k\gamma,g}=\gen{f,g\big|_k\overline\gamma}$, where $\overline \g \g=nI$.
		Since $\gamma$ and $\overline\gamma$ are both elements of reduced norm $p$ in $\O'(D,N)$, by \eqref{eq: inner products equal}, we have $\gen{f,g\big|_k\gamma}=\gen{f,g\big|_k\overline\gamma}$.
		It follows that $T_p$ is self-adjoint on $S_k(\Gamma'(D,N),\chi)$ and the proof of the lemma is complete.
	\end{proof}
	
	We are now ready to prove Theorem \ref{theorem: JL1}.
	
	\begin{proof}[Proof of Theorem \ref{theorem: JL1}]
		Let $\chi$ be a nontrivial character of $\Gamma(D,N)/\Gamma'(D,N)$. By Lemma \ref{lemma: self-adjoint}, Hecke operators $T_n$, $(n,DN)=1$, are commuting self-adjoint linear operators on $S_k(\Gamma'(D,N),\chi)$ and $S_k(\Gamma'(D,N),\overline\chi)$. Thus, the two spaces of modular forms have bases consisting of simultaneous eigenforms for all Hecke operators.
		Moreover, by Lemma \ref{lemma: c}, if $f$ is a Hecke eigenform in $S_k(\Gamma'(D,N),\chi)$, then $f^c$ is a Hecke eigenform in $S_k(\Gamma'(D,N),\overline\chi)$ and the eigenvalues are related by $T_nf^c=\overline{\lambda_n(f)}f^c$, where $\lambda_n(f)$ is the eigenvalue of $T_n$ corresponding to $f$.
		Now by Lemma \ref{lemma: self-adjoint}, all eigenvalues $\lambda_n(f)$ are real. Therefore, the eigenvalue of $T_n$ corresponding to $f^c$ is the same as that corresponding to $f$. It follows that
		$$
		\tr(T_n|S_k(\Gamma'(D,N),\chi))
		=\tr(T_n|S_k(\Gamma'(D,N),\overline\chi).
		$$
		Finally, by the classical Jacquet-Langlands correspondence for Eichler orders and Theorem \ref{theorem: JL}, we have
		$$
		\tr(T_n|S_k(\Gamma'(D,N)),\chi)
		+\tr(T_n|S_k(\Gamma'(D,N),\overline\chi)
		=2\tr(T_n|S_k(\Gamma_0(2DN))^{2D\text{-new}}).
		$$
		From this, we conclude that
		$$
		\tr(T_n|S_k(\Gamma'(D,N),\chi))
		=\tr(T_n|S_k(\Gamma_0(2DN))^{2D\text{-new}}).
		$$
		This completes the proof of Theorem \ref{theorem: JL1}.
	\end{proof}
	\bibliographystyle{plain}
	\bibliography{Jacquet-Langlands}
\end{document}